\begin{document}

\newtheorem{theorem}{Theorem}[section]
\newtheorem{corollary}[theorem]{Corollary}
\newtheorem{lemma}[theorem]{Lemma}
\newtheorem{proposition}[theorem]{Proposition}
\newtheorem{conjecture}[theorem]{Conjecture}
\newtheorem{commento}[theorem]{Comment}
\newtheorem{problem}[theorem]{Problem}
\newtheorem{remarks}[theorem]{Remarks}
\newtheorem{remark}[theorem]{Remark}
\newtheorem{example}[theorem]{Example}

\theoremstyle{definition}
\newtheorem{definition}[theorem]{Definition}

\newcommand{\Nb}{{\mathbb{N}}}
\newcommand{\Rb}{{\mathbb{R}}}
\newcommand{\Tb}{{\mathbb{T}}}
\newcommand{\Zb}{{\mathbb{Z}}}
\newcommand{\Cb}{{\mathbb{C}}}

\newcommand{\Af}{\mathfrak A}
\newcommand{\Bf}{\mathfrak B}
\newcommand{\Ef}{\mathfrak E}
\newcommand{\Gf}{\mathfrak G}
\newcommand{\Hf}{\mathfrak H}
\newcommand{\Kf}{\mathfrak K}
\newcommand{\Lf}{\mathfrak L}
\newcommand{\Mf}{\mathfrak M}
\newcommand{\Rf}{\mathfrak R}

\newcommand{\x}{\mathfrak x}

\def\A{{\mathcal A}}
\def\B{{\mathcal B}}
\def\C{{\mathcal C}}
\def\D{{\mathcal D}}
\def\E{{\mathcal E}}
\def\F{{\mathcal F}}
\def\G{{\mathcal G}}
\def\H{{\mathcal H}}
\def\J{{\mathcal J}}
\def\K{{\mathcal K}}
\def\LL{{\mathcal L}}
\def\N{{\mathcal N}}
\def\M{{\mathcal M}}
\def\N{{\mathcal N}}
\def\OO{{\mathcal O}}
\def\P{{\mathcal P}}
\def\Q{{\mathcal Q}}
\def\SS{{\mathcal S}}
\def\T{{\mathcal T}}
\def\U{{\mathcal U}}
\def\W{{\mathcal W}}

\def\ext{\operatorname{Ext}}
\def\span{\operatorname{span}}
\def\clsp{\overline{\operatorname{span}}}
\def\Ad{\operatorname{Ad}}
\def\ad{\operatorname{Ad}}
\def\tr{\operatorname{tr}}
\def\id{\operatorname{id}}
\def\en{\operatorname{End}}
\def\aut{\operatorname{Aut}}
\def\out{\operatorname{Out}}
\def\coker{\operatorname{coker}}

\def\la{\langle}
\def\ra{\rangle}
\def\rh{\rightharpoonup}
\def\cl{\textcolor{blue}{$\clubsuit$}}

\title[On Conjugacy of Subalgebras]{On Conjugacy of Subalgebras in Graph $C^*$-Algebras. II}

\author{Tomohiro Hayashi, Jeong Hee Hong,  and Wojciech Szyma{\'n}ski}

\subjclass[2020]{Primary 46L05, 46L40}

\keywords{graph $C^*$-algebra;  MASA;  automorphism}

\thanks{W. Szyma\'{n}ski was supported by the DFF-Reesearch Project 2 on `Automorphisms and invariants of operator algebras', 
Nr. 7014--00145B.}

\date{\today}

\begin{abstract} 
We apply a method inspired by Popa's intertwining-by-bimodules technique to investigate inner conjugacy of MASAs in graph $C^*$-algebras. First we give 
a new proof of non-inner conjugacy of the diagonal MASA $\D_E$ to its non-trivial image under a quasi-free automorphism, where $E$ is a finite 
transitive graph. Changing graphs representing the algebras, this result applies to some non quasi-free automorphisms as well. 
Then we exhibit a large class of MASAs in the Cuntz algebra $\OO_n$ that are not inner conjugate to the diagonal $\D_n$. 
\end{abstract}

\maketitle



\section{Introduction}

This paper is devoted to investigations of conjugacy of MASAs in the $C^*$-algebras of finite directed graphs. 
The problem of conjugacy of MASAs in factor von Neumann algebras has been extensively investigated for many years, in particular with relation 
to Cartan subalgebras. Variety of different situations may occur. There exist factors with a unique Cartan subalgebra or with (uncountably) many 
ones, e.g. see \cite{Packer, SpV, HoVa}. 

This problem has received much less attention by researchers working with $C^*$-algebras. In particular, the literature 
on conjugacy of subalgebras in simple purely infinite $C^*$-algebras is rather scarce. The present paper is continuation of investigations of this problem 
initiated in \cite{CHS2015} and \cite{HHMSz}, where the question of inner conjugacy to the diagonal MASA of its images under quasi-free automorphisms 
was looked at in the Cuntz algebras and more generally graph $C^*$-algebras. The arguments from \cite{CHS2015} and \cite{HHMSz} where based 
on rather ad hoc estimations, tailor made for the cases at hand. Now, we aim at developing a more general technique that may be applicable in 
many diverse instances. The idea is simple, see Lemma \ref{calgibbm} below, and it is inspired by Popa's intertwining-by-bimodules technique, 
see Theorem \ref{intbybimod} below. We believe that this approach is conceptually sound and may be useful in many a different situation. 

Our paper is organized as follows. 
Section 2 contains rather extensive preliminaries on graph $C^*$-algebras, traces on them, and their endomorphisms. 
In particular, a discussion of aspects of the classical Perron-Frobenius theory is included, in so far as it is relevant for our purpose. At the end of this section, 
we briefly state the key technical device we intend to use for distinguishing non-inner conjugate subalgebras. 
Section 3 contains a discussion of quasi-free automorphisms in relation to aspects of the Perron-Frobenius theory. In this section we give a new, 
and hopefully conceptually more interesting, proof of non-inner conjugacy to the diagonal of its images under non-trivial quasi-free automorphisms, 
see Theorem \ref{qfree} below. 
In section 4, we show that by changing graph representing the algebra in question our main result on quasi-free automorphism becomes 
applicable to some non quasi-free automorphisms as well. 
In section 5, we exhibit a large class of MASAs of the Cuntz algebra $\OO_n$ that are not inner conjugate to the diagonal MASA $\D_n$, 
thus generalizing the case resulting from quasi-free automorphisms. 
In the final section 6, we collected proofs of a few technical lemmas needed in the preceding parts of the paper.


\section{Preliminaries}\label{sectionpreliminaries}

\subsection{Finite directed graphs and their $C^*$-algebras}

Let $E=(E^0,E^1,r,s)$ be a directed graph, where $E^0$ and $E^1$ are {\em finite} sets of vertices 
and edges, respectively, and $r,s:E^1\to E^0$ are range and source maps, respectively. 
A {\em path} $\mu$ of length $|\mu|=k\geq 1$ is a sequence 
$\mu=(\mu_1,\ldots,\mu_k)$ of $k$ edges $\mu_j$ such that 
$r(\mu_j)=s(\mu_{j+1})$ for $j=1,\ldots, k-1$. We view the vertices as paths of length $0$.
The set of all paths of length $k$ is denoted $E^k$, and $E^*$ denotes the collection of 
all finite paths (including paths of length zero). The range and source maps naturally 
extend from edges $E^1$ to paths $E^k$. A {\em sink} is a vertex $v$ which emits 
no edges, i.e. $s^{-1}(v)=\emptyset$. A {\em source} is a vertex $w$ which receives 
no edges, i.e. $r^{-1}(w)=\emptyset$. By a {\em cycle} we mean a path $\mu$ of 
length $|\mu|\geq 1$ such that $s(\mu)=r(\mu)$. 
A cycle $\mu=(\mu_1,\ldots,\mu_k)$ has an exit if there is a $j$ such that $s(\mu_j)$ 
emits at least two distinct edges. If $\alpha$ is an initial subpath of $\beta$ then we write $\alpha\prec\beta$. 
Graph $E$ is {\em transitive} if for any two vertices $v,w$ there exists 
a path $\mu\in E^*$ from $v$ to $w$ of non-zero length. Thus a transitive graph does not contain any 
sinks or sources. Given a graph $E$, we will denote by $A=[A(v,w)]_{v,w\in E^0}$ its {\em adjacency matrix}.  
That is, $A$ is a matrix with rows and columns indexed by the vertices of $E$, such that $A(v,w)$ is 
the number of edges with source $v$ and range $w$. If the graph $E$ is transitive then the corresponding matrix $A$ is {\em irreducible}, in the sense 
that for any two vertices $v,w$ there is a positive integer $k$ such that $A^k(v,w)>0$. Here $A^k$ is the $k$'th power of matrix $A$ and 
hence $A^k(w, v)$ gives the number of paths from vertex $w$ to vertex $v$. 

The $C^*$-algebra $C^*(E)$ corresponding to a graph $E$ 
is by definition, \cite{KPRR} and \cite{KPR},  the universal $C^*$-algebra generated by mutually 
orthogonal projections $P_v$, $v\in E^0$, and partial isometries $S_e$, $e\in E^1$, 
subject to the following two relations: 
\begin{itemize}
\item[(GA1)] $S_e^*S_e=P_{r(e)}$,  
\item[(GA2)] $P_v=\sum_{s(e)=v}S_e S_e^*$ if $v\in E^0$ emits at least one edge. 
\end{itemize}
For a path $\mu=(\mu_1,\ldots,\mu_k)$ we denote by $S_\mu=
S_{\mu_1}\cdots S_{\mu_k}$ the corresponding partial isometry in $C^*(E)$.  
 We agree to write $S_v=P_v$ for a  $v\in E^0$.
Each $S_\mu$ is non-zero with the domain projection $P_{r(\mu)}$. 
Then $C^*(E)$ is the closed span of $\{S_\mu S_\nu^*:\mu,\nu\in E^*\}$.  
Note that $S_\mu S_\nu^*$ is non-zero if and only if 
$r(\mu)=r(\nu)$. In that case, $S_\mu S_\nu^*$ is a partial isometry with domain and range projections 
equal to $S_\nu S_\nu^*$ and $S_\mu S_\mu^*$, respectively. 

The range projections $P_\mu=S_\mu S_\mu^*$ of all 
partial isometries $S_\mu$ mutually commute, and the abelian $C^*$-subalgebra of $C^*(E)$ 
generated by all of them is called the diagonal subalgebra and denoted $\D_E$. 
We set $\D^0_E = {\rm span}\{P_v  :  v\in E^0  \}$ and, more generally, 
$\D_E^k= {\rm span}\{P_\mu  :  \mu\in E^k  \}$ for $k\geq 0$. $C^*$-algebra $\D_E$ coincides with the 
norm closure of $\bigcup_{k=0}^\infty\D_E^k$. 
If $E$ does not contain sinks and all cycles have exits then 
$\D_E$ is a MASA (maximal abelian subalgebra) in $C^*(E)$ by \cite[Theorem 5.2]{HPP}. 
Throughout this paper, we make the following

\vspace{2mm}\noindent
{\bf standing assumption:}  all graphs we consider are finite, transitive  
and all cycles in these graphs admit exits. 

\vspace{2mm}
There exists a strongly continuous action $\gamma$ of the circle group $U(1)$ on $C^*(E)$, 
called the {\em gauge action}, such that $\gamma_z(S_e)=zS_e$ and $\gamma_z(P_v)=P_v$ 
for all $e\in E^1$, $v\in E^0$ and $z\in U(1)\subseteq\Cb$. 
The fixed-point algebra $C^*(E)^\gamma$ for the gauge action is an AF-algebra, denoted 
$\F_E$ and called the core AF-subalgebra of $C^*(E)$. $\F_E$ is the closed span of 
$\{S_\mu S_\nu^*:\mu,\nu\in E^*,\;|\mu|=|\nu|\}$. For $k\in\Nb=\{0,1,2,\ldots\}$ 
we denote by $\F_E^k$ the linear span of $\{S_\mu S_\nu^*:\mu,\nu\in E^*,\;|\mu|=|\nu|= k\}$. 
$C^*$-algebra $\F_E$ coincides with the norm closure of $\bigcup_{k=0}^\infty\F_E^k$. 

We consider the usual {\em shift} on $C^*(E)$, \cite{CK}, given by
\begin{equation}\label{shift}
\varphi(x)=\sum_{e\in E^1} S_e x S_e^*, \;\;\; x\in C^*(E).  
\end{equation} 
In general, for finite graphs without sinks and sources, the shift is a unital, completely positive map. However, it 
is an injective $*$-homomorphism when restricted to the relative commutant
$(\D_E^0)'\cap C^*(E)$  of $\D_E^0$ in $C^*(E)$. 

We observe that for each $v\in E^0$ projection $\varphi^k(P_v)$ is minimal in the center of $\F_E^k$. 
The $C^*$-algebra $\F_E^k\varphi^k(P_v)$ is the linear span of partial isometries $S_\mu S_\nu^*$ with 
$|\mu|=|\nu|=k$ and $r(\mu)=r(\nu)=v$. It is isomorphic to the full matrix algebra of size $\sum_{w\in E^0}
A^k(w, v)$. The multiplicity of $\F_E^k\varphi^k(P_v)$ in $\F_E^{k+1}\varphi^{k+1}(P_w)$ is $A(v,w)$, 
so the Bratteli diagram for $\F_E$ is induced from the graph $E$, see \cite{CK}, \cite{KPRR} or \cite{BPRSz}.   
\[ \beginpicture

\setcoordinatesystem units <1.5cm,1.2cm>
\setplotarea x from -4 to 5, y from -2 to 2
\put {$\bullet$} at -2 1
\put {$\bullet$} at -1 1
\put {$\bullet$} at 2 1
\put {$\bullet$} at -2 -1
\put {$\bullet$} at 1 -1
\put {$\bullet$} at 2 -1
\put {$\hdots$} at 1 1
\put {$\hdots$} at -1 -1
\setlinear
\plot -0.9 0.9  0.9 -0.9 /
\arrow <0.235cm> [0.2,0.6] from 0.7 -0.7 to 0.9 -0.9
\put {$\F_E\varphi^k(P_v)$} at -1 1.4
\put {$\F_E\varphi^{k+1}(P_w)$} at 1 -1.4
\put {$\F_E^k$} at 3 1
\put {$\F_E^{k+1}$} at 3 -1
\put {$A(v,w)$} at 0.5 0.1

\endpicture \] 
We denote 
\begin{equation}
 \Bf := (\D_E^0)'\cap \F_E^1. 
\end{equation}
That is, $\Bf$ is the linear span of elements $S_e S_f^*$, $e,f\in E^1$, with $s(e)=s(f)$. We note that 
$\Bf$ is contained in the multiplicative domain of $\varphi$. We have $\D_E^1 \subseteq \Bf \subseteq \F_E^1$ and 
\begin{equation}\label{fekrelcomm}
\varphi^k(\Bf) = (\F_E^k)' \cap \F_E^{k+1} \cong \bigoplus_{v,w\in E^0}M_{A(v,w)}(\Cb)
\end{equation}
for all $k$. For $v,w\in E^0$, we denote 
\begin{equation}\label{vQw}
_vQ_w := \sum_{e\in E^1, s(e)=v, r(e)=w}P_e. 
\end{equation}
Each $_vQ_w$ is a minimal projection in the center of $\Bf$ and $\Bf_vQ_w\cong M_{A(v,w)}(\Cb)$. We put
\begin{equation}\label{Bk}
\Bf_E^k := \bigvee_{j=0}^{k-1}\varphi^j(\Bf), 
\end{equation}
for $k\geq 1$, 
the $C^*$-algebra generated by $\bigcup_{j=0}^{k-1}\varphi^j(\Bf)$. In general, 
if $A$ and $B$ are both $C^*$-subalgebras of a $C^*$-algebra $C$, then we denote by $A\vee B$ the $C^*$-subalgebra 
of $C$ generated by $A$ and $B$. 
Since for all $k$ we have 
\begin{equation}
\D_E^k =  \bigvee_{j=0}^{k-1}\varphi^j(\D_E^1), 
\end{equation}
it is easy to see that
\begin{equation}\label{dbf}
\D_E^k \subseteq \Bf_E^k \subseteq \F_E^k. 
\end{equation}
We observe that 
\begin{equation}\label{prQ}
_vQ_w\varphi(_{v'}Q_{w'}) = \delta_{w,v'} \sum_{s(e)=v, r(e)=s(f)=w, r(f)=w'} P_{ef}. 
\end{equation}
This implies that 
$$ \begin{aligned}
\Bf_E^k & = \bigoplus_{v_1,\ldots,v_{k+1}\in E^0} \Bf _{v_1}Q_{v_2} \vee \varphi(\Bf _{v_2}Q_{v_3}) \vee \ldots \vee 
\varphi^{k-1}(\Bf_{v_k}Q_{v_{k+1}}) \\
 & = \bigoplus_{v_1,\ldots,v_{k+1}\in E^0} \Bf _{v_1}Q_{v_2} \otimes \varphi(\Bf _{v_2}Q_{v_3}) \otimes \ldots \otimes 
\varphi^{k-1}(\Bf_{v_k}Q_{v_{k+1}}). 
\end{aligned} $$

There exist faithful conditional expectations $\Phi_{\F}:C^*(E)\to\F_E$ and $\Phi_{\D}:C^*(E)\to\D_E$ 
such that $\Phi_{\F}(S_\mu S_\nu^*)=0$ for $|\mu|\neq|\nu|$ and $\Phi_{\D}(S_\mu S_\nu^*)=0$ 
for $\mu\neq\nu$. We note that $\Phi_{\D} = \Phi_{\D}\circ \Phi_{\F}$ and 
$$ \begin{aligned}
\Phi_{\D}\circ\varphi & = \varphi\circ\Phi_{\D} \;\;\; \text{on } \D_E, \\
\Phi_{\F}\circ\varphi & = \varphi\circ\Phi_{\F} \;\;\; \text{on } \F_E.
\end{aligned} $$
For an integer $m\in\Zb$, we denote by $C^*(E)^{(m)}$ the spectral subspace of the gauge 
action corresponding to $m$. That is,  
\begin{equation}\label{msp} 
C^*(E)^{(m)}:=\{x\in C^*(E) \mid \gamma_z(x)=z^m x,\,\forall z\in U(1)\}. 
\end{equation} 
In particular, $C^*(E)^{(0)}=C^*(E)^\gamma$. For each 
$m\in\Zb$ there is a unital, contractive and completely bounded map $\Phi^m:C^*(E)\to C^*(E)^{(m)}$ given by 
\begin{equation}
\Phi^m(x) = \int_{z\in U(1)} z^{-m}\gamma_z(x)dx. 
\end{equation}
In particular, $\Phi^0=\Phi_\F$.
We have $\Phi^m(x)=x$ for all $x\in C^*(E)^{(m)}$.  If 
$x\in C^*(E)$ and $\Phi^m(x)=0$ for all $m\in\Zb$ then $x=0$. 


\subsection{The trace on the core AF-subalgebra}

We recall the definition of a natural trace on the core $AF$-subalgebra $\F_E$. For relevant facts from the 
Perron-Frobenius theory, see for example \cite{GHJ}, \cite{G}. 

Let $\beta$ be the Perron-Frobenius eigenvalue of the matrix $A$ and let $(x(v))_{v\in E^0}$ be the corresponding 
Perron-Frobenius eigenvector. That is, $\beta>0$, for each $v\in E^0$ we have $x(v)>0$, and 
\begin{equation}\label{pfev}
\sum_{w\in E^0} A(v,w) x(w) = \beta x(v). 
\end{equation}
We set $X:=\sum_{v\in E^0} x(v)$ and define a tracial state $\tau$ on $\F_E$ so that 
\begin{equation}\label{trace}
\tau(S_{\mu}S_{\nu}^*) = \delta_{\mu,\nu}\frac{x(r(\mu))}{X\beta^k} 
\end{equation}
for $\mu,\nu\in E^k$. We have $\tau(\Phi_{\D}(x)) = \tau(x)$ for all $x\in\F_E$.  

\begin{remark}\label{traceshift}\rm 
Trace $\tau$ defined above is not shift invariant, in general. That is, it may happen that $\tau(\varphi(x))\neq\tau(x)$ for some $x\in\F_E$. In fact, 
$\tau$ is $\varphi$-invariant if and only if 
$$ \sum_{v\in E^0} A(v,w) = \beta $$ 
for each $w\in E^0$. For example, the matrix 
$$ A = \left( \begin{array}{cc} 2 & 1 \\ 1 & 4 \end{array} \right) $$
does not satisfy this condition. 
\end{remark}


\subsection{Endomorphisms determined by unitaries}

Cuntz's classical approach to the study of endomorphisms of $\OO_n$, \cite{Cun}, has  been developed further in \cite{CSz} and 
extended to graph $C^*$-algebras in \cite{CHS2012}, \cite{AJSz} and \cite{JSSz}. 

We denote by $\U_E$ the collection of all those unitaries in $C^*(E)$ which 
commute with all vertex projections $P_v$, $v\in E^0$. That is 
\begin{equation}\label{ue}
\U_E:=\U((\D_E^0)'\cap C^*(E)). 
\end{equation}
If $u\in\U_E$ then $uS_e$, $e\in E^1$, are partial isometries in $C^*(E)$ which together with 
projections $P_v$, $v\in E^0$, satisfy (GA1) and (GA2). Thus, by the universality of $C^*(E)$, 
there exists a unital $*$-homomorphism $\lambda_u:C^*(E)\to C^*(E)$ such 
that\footnote{The reader should be aware that in some papers (e.g. in \cite{Cun}) a 
different convention is used, namely $\lambda_u(S_e)=u^* S_e$.}
\begin{equation}\label{lambda}
\lambda_u(S_e)=u S_e \;\;\; {\rm and}\;\;\;  \lambda_u(P_v)=P_v, \;\;\;  {\rm for}\;\; e\in E^1,\; v\in E^0. 
\end{equation}
The mapping $u\mapsto\lambda_u$ establishes 
a bijective correspondence between $\U_E$ and the semigroup of those unital endomorphisms 
of $C^*(E)$ which fix all  $P_v$, $v\in E^0$.
As observed in \cite[Proposition 2.1]{CHS2012}, if $u\in\U_E\cap\F_E$ then $\lambda_u$ 
is automatically injective. We say $\lambda_u$ is {\em invertible} if $\lambda_u$ is an automorphism of $C^*(E)$. 
If $u$ belongs to $\U_E\cap\F_E^k$ for some $k$, then the corresponding endomorphism $\lambda_u$ is called 
{\em localized}, \cite{CP}, \cite{CHS2012}. 

 If $u\in\U(\Bf)$ then $\lambda_u$ is automatically invertible with inverse 
$\lambda_{u^*}$ and the map 
\begin{equation}\label{quasifree}
\U(\Bf) \ni u\mapsto \lambda_u \in\aut(C^*(E))
\end{equation} 
is a group homomorphism with range inside 
the subgroup of {\em quasi-free automorphisms} of $C^*(E)$, see \cite{Z}. Note that this group is almost never trivial 
and it is non-commutative if graph $E$ contains two edges $e,f\in E^1$ such that $s(e)=s(f)$ and $r(e)=r(f)$. 

The shift $\varphi$ globally preserves $\U_E$, $\F_E$ and $\D_E$. For $k\geq 1$ we denote 
\begin{equation}\label{uk}
u_k := u\varphi(u)\cdots\varphi^{k-1}(u).   
\end{equation}
For each $u\in\U_E$ and all $e\in E^1$ we have $S_e u = \varphi(u) S_e$, and thus 
\begin{equation}\label{uaction}
\lambda_u(S_\mu S_\nu^*)=u_{|\mu|}S_\mu S_\nu^*u_{|\nu|}^* 
\end{equation}
for any two paths $\mu,\nu\in E^*$. 


\subsection{The Popa criterion}\label{sectionacriterion}

In the analysis of uniqueness of Cartan subalgebras of tracial von Neumann algebras, 
Popa's {\em intertwining-by-bimodules} technique has been extremely successful. This method goes back to \cite{Popa}, 
but has been polished over the years and recently even extended to type $III$ case, \cite{HoIs}. 
 The following result contains its essential ingredient. 

\begin{theorem}[S. Popa]\label{intbybimod}
Let $M$ be a von Neumann algebra equipped with a faithful normal trace $\tau$. Let $A,B$ be von Neumann subalgebras 
of $M$, and let $\Phi_B:M\to B$ be a $\tau$-preserving conditional expectation. Then the following two conditions are equivalent. 
\begin{itemize}
\item[(1)] There exist non-zero projections $p\in A$, $q\in B$, a non-zero partial isometry $v\in pMq$ and a $*$-homomorphism 
$\phi:pAp\to qBq$ such that $xv=v\phi(x)$ for all $x\in pAp$. 
\item[(2)] There is no sequence of unitaries $w_n\in\U(A)$ such that 
\begin{equation}\label{zerolimit}
|| \Phi_B(xw_n y)||_2 \underset{n\to\infty}{\longrightarrow} 0, \;\;\; \forall x,y\in M. 
\end{equation}
\end{itemize}
\end{theorem}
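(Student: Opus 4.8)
The plan is to prove the equivalence of the two conditions in Popa's intertwining theorem by establishing the contrapositive in both directions, treating (1) $\Leftrightarrow$ (2) as the statement that (1) holds if and only if the approximation in \eqref{zerolimit} fails for \emph{some} pair $x,y$. First I would set up the natural Hilbert space and bimodule framework. Equipping $M$ with the trace $\tau$, I form the GNS Hilbert space $L^2(M,\tau)$ with its vector $\hat{x}$ for $x\in M$, and I regard $B\subseteq M$ as acting on the closed subspace $L^2(B,\tau)$, with the conditional expectation $\Phi_B$ extending to the orthogonal (Jones) projection $e_B:L^2(M)\to L^2(B)$. The key object is the $A$--$B$ bimodule $\H:=L^2(M)$ together with the basic construction algebra $\langle M,e_B\rangle$. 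The heart of the matter is to translate the analytic condition (2) into a statement about the existence of a nonzero $A$--$B$ subbimodule of $L^2(M)$ that is ``finite'' (i.e.\ of finite trace, or equivalently finitely generated as a right $B$-module).

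The main direction I expect to carry the weight is $\neg(2)\Rightarrow(1)$: assuming there is no sequence of unitaries $w_n\in\U(A)$ driving $\|\Phi_B(xw_ny)\|_2$ to zero, I want to produce the projections $p,q$, the partial isometry $v$, and the intertwining $*$-homomorphism $\phi$. The standard route is to consider, for a fixed $x=y=1$, the positive element $\sum$ or rather the net of averages over $\U(A)$: one looks at the weak-$*$ limit points of $w_n e_B w_n^*$ (equivalently, one applies a Day-type convexity/averaging argument to the set $\{w e_B w^* : w\in\U(A)\}$ inside the von Neumann algebra $\langle M,e_B\rangle$). The failure of \eqref{zerolimit} guarantees, via a compactness and a uniform-positivity argument, that the weak closure of the convex hull of $\{w e_B w^*\}$ does not contain $0$; hence there is a nonzero element $a$ in that closure minimizing the $\|\cdot\|_2$-norm coming from the canonical trace $\mathrm{Tr}$ on $\langle M,e_B\rangle$. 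The minimality forces $a$ to commute appropriately, i.e.\ $waw^*=a$ for all $w\in\U(A)$, so $a\in A'\cap\langle M,e_B\rangle$ and $a$ has finite $\mathrm{Tr}$-trace. A nonzero finite-trace element of $A'\cap\langle M,e_B\rangle$ then yields, by the structure theory of the basic construction, a nonzero $A$--$B$ subbimodule of $L^2(M)$ that is finitely generated over $B$, and from a right-$B$-basis of this bimodule one reads off the desired $p,q,v,\phi$ satisfying $xv=v\phi(x)$.

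For the converse direction $(1)\Rightarrow\neg(2)$, I would argue directly and more cheaply. Given $p,q,v,\phi$ with $xv=v\phi(x)$ for all $x\in pAp$, suppose toward a contradiction that some sequence $w_n\in\U(A)$ satisfies \eqref{zerolimit} for all $x,y\in M$. Specializing $x=v^*$ and $y=v$ and using $w_n v=v\,\phi(p w_n p)$ together with the fact that $\phi$ maps into $qBq$ and $\Phi_B$ is a $B$-bimodule map, I can compute $\Phi_B(v^* w_n v)=\Phi_B(v^*v\,\phi(pw_np))=v^*v\,\phi(pw_np)$, whose $\|\cdot\|_2$-norm is bounded below by a positive constant depending on $\tau(v^*v)$ and the fact that $\phi(pw_np)$ is a partial isometry (or at least of controlled norm) in $qBq$; this contradicts convergence to $0$. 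The estimate uses that $\tau$ is faithful so that $\|\cdot\|_2$ genuinely detects the nonzero element $v^*v$.

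The step I expect to be the genuine obstacle is the averaging/minimization argument in the main direction: one must verify that the weak-$*$ limit point $a$ is simultaneously (i) nonzero, (ii) of finite canonical trace $\mathrm{Tr}$ in $\langle M,e_B\rangle$, and (iii) fixed by conjugation by $\U(A)$, and that these three together really do produce a finitely generated right $B$-module. The delicate point is controlling the trace $\mathrm{Tr}$ under weak-$*$ limits, since $\mathrm{Tr}$ is only \emph{normal} and need not be weak-$*$ continuous on the whole of $\langle M,e_B\rangle$; one circumvents this by working with the $\|\cdot\|_{2,\mathrm{Tr}}$-norm on the finite-trace ideal and invoking that $e_B$ itself has $\mathrm{Tr}(e_B)=1$, so every $w e_B w^*$ lies in the Hilbert space $L^2(\langle M,e_B\rangle,\mathrm{Tr})$ with norm $1$, making the convex set bounded and its $\|\cdot\|_{2,\mathrm{Tr}}$-closed convex hull a genuine closed convex subset of a Hilbert space, where a unique minimal-norm element exists by the standard projection argument.
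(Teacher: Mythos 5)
First, a remark on the comparison itself: the paper contains \emph{no} proof of Theorem \ref{intbybimod}. It is quoted as a known result of Popa (reference \cite{Popa}, see also \cite{HoIs}), and the only statement the authors actually prove and use is the elementary one-line $C^*$-analogue of one implication, Lemma \ref{calgibbm}. So your sketch can only be measured against the standard argument in the literature, not against anything in this paper. Your overall architecture for $\neg(2)\Rightarrow(1)$ --- basic construction $\langle M,e_B\rangle$, averaging over $\U(A)$, the unique minimal $\|\cdot\|_{2,\mathrm{Tr}}$-norm element of a closed convex set, its $\Ad(\U(A))$-invariance, finiteness of its trace, and the passage to a finitely generated right $B$-module --- is indeed the correct skeleton, and you correctly flag the normality-versus-continuity issue for $\mathrm{Tr}$. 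But two steps, one in each direction, would fail as written.

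In the main direction you average the conjugates $we_Bw^*$ of $e_B$ itself, i.e.\ you fix $x=y=1$. The negation of (2) gives no uniform positivity for this particular set: it only says that for each sequence $w_n$ there exist \emph{some} $x,y$, depending on the sequence, witnessing non-convergence. One must first upgrade $\neg(2)$, by a diagonalisation over a countable dense subset of $M$ (this is where separability of the predual enters), to the statement that there exist $\delta>0$ and a \emph{finite set} $F\subset M$ with $\sum_{x,y\in F}\|\Phi_B(y^*wx)\|_2^2\geq\delta$ for all $w\in\U(A)$, and then average the element $f=\sum_{y\in F}ye_By^*$ rather than $e_B$; the pairwise inner products $\langle wfw^*,w'fw'^*\rangle_{\mathrm{Tr}}=\sum_{x,y\in F}\|\Phi_B(y^*(w^*w')x)\|_2^2\geq\delta$ are what keep $0$ out of the closed convex hull. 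With $e_B$ alone the claim is simply false: take $M=L(\mathbb{F}_2)$ with free generators $a,b$, $B=L(a)$ and $A=bL(a)b^*$. Then (1) holds ($A$ and $B$ are unitarily conjugate), yet for $w_i=ba^ib^*$ one has $\Phi_B(w_j^*w_i)=\delta_{ij}1$, so $\|\frac{1}{N}\sum_{i=1}^Nw_ie_Bw_i^*\|_{2,\mathrm{Tr}}^2=\frac{1}{N}\to0$ and $0$ lies in the closed convex hull of $\{we_Bw^*:w\in\U(A)\}$.

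The converse direction is also not as cheap as you claim. From $xv=v\phi(x)$ on $pAp$ you get $v^*w_nv=v^*v\,\phi(pw_np)$, hence $\Phi_B(v^*w_nv)=\Phi_B(v^*v)\,\phi(pw_np)$ (note $v^*v$ need not lie in $B$, so your identity $\Phi_B(v^*v\,\phi(\cdot))=v^*v\,\phi(\cdot)$ is already unwarranted). But $pw_np$ is merely a contraction in $pAp$; for a unitary $w\in\U(A)$ it can have arbitrarily small $2$-norm, indeed can vanish (take $A\supseteq M_2(\Cb)$ with $p=e_{11}$ and $w=e_{12}+e_{21}$), and $\phi$ is assumed neither injective nor trace-preserving. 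So there is no positive lower bound for $\|\Phi_B(v^*w_nv)\|_2$, and the single pair $(v^*,v)$ does not suffice. The standard proof of $(1)\Rightarrow\neg(2)$ instead produces from $v,\phi$ a nonzero projection $P\in A'\cap\langle M,e_B\rangle$ with $\mathrm{Tr}(P)<\infty$, approximates $P$ in $\|\cdot\|_{2,\mathrm{Tr}}$ by finite sums $\sum_ix_ie_By_i$, and derives a contradiction from $P=w_nPw_n^*$ together with $\mathrm{Tr}\bigl((xe_By)\,w_n(x'e_By')^*w_n^*\bigr)=\tau\bigl(\Phi_B(x'^*x)... \bigr)$-type identities, each of which tends to $0$ under hypothesis (2). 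Neither repair is cosmetic; both are where the actual content of Popa's theorem lives.
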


This beautiful theorem is inapplicable to graph $C^*$-algebras, of course. However, the following simple fact 
remains valid in the $C^*$-algebraic setting. 

\begin{lemma}\label{calgibbm}
Let $M$ be a unital $C^*$-algebra, and let $A,B$ be its $C^*$-subalgebras containing the unit of $M$. Let $\Phi_B:M\to B$ be 
a conditional expectation, and let $\tau$ be a trace on $B$. If there is a sequence of unitaries 
$w_n\in\U(A)$ such that (\ref{zerolimit}) holds then there is no unitary $v\in\U(M)$ such that $vAv^*\subseteq B$. 
\end{lemma}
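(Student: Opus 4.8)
The plan is to argue by contraposition: I would assume that such a unitary $v\in\U(M)$ with $vAv^*\subseteq B$ exists, and derive that every sequence of unitaries in $\U(A)$ fails the limit condition (\ref{zerolimit}). Concretely, fix any sequence $w_n\in\U(A)$; I want to show that $\|\Phi_B(x w_n y)\|_2$ does \emph{not} tend to $0$ for all $x,y\in M$, by exhibiting a single good pair $x,y$ for which the expression stays bounded away from zero.

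The key computation is to exploit the intertwining provided by $v$. Since $vAv^*\subseteq B$, for each $n$ the element $v w_n v^*$ lies in $B$, and it is a unitary there. Now choose the test elements $x=v^*$ and $y=v$. Then
\begin{equation*}
\Phi_B(x w_n y) = \Phi_B(v^* w_n v).
\end{equation*}
The idea is to move the conditional expectation past the $B$-valued pieces. Writing $w_n = v^*(v w_n v^*)v$ and using that $\Phi_B$ is a $B$-bimodule map together with $v w_n v^*\in B$, I would first rewrite $v^* w_n v$ in a way that isolates the $B$-part; more directly, since $\Phi_B$ is a conditional expectation onto $B$ and $v w_n v^*\in\U(B)$, I expect $\Phi_B(v^*w_nv)$ to be computable in terms of $\Phi_B(v^*v)$ conjugated by the unitary $vw_nv^*$. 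The cleanest route is to observe $\|\Phi_B(v^* w_n v)\|_2$ should be bounded below by a quantity independent of $n$, because conjugation by the $B$-unitary $vw_nv^*$ is a $\tau$-preserving isometry for $\|\cdot\|_2$ if $\tau$ is conjugation-invariant; since $\tau$ is merely a trace on $B$ this invariance holds for inner automorphisms of $B$.

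The main obstacle I anticipate is making the manipulation $\Phi_B(v^*w_nv)$ genuinely rigorous: I must carefully use the bimodule property $\Phi_B(b_1 m b_2)=b_1\Phi_B(m)b_2$ for $b_1,b_2\in B$ and the fact that $v w_n v^*\in B$ to factor the expression, rather than naively commuting $v$ through $\Phi_B$ (which is false in general since $v\notin B$). The correct identity I would establish is $\Phi_B(v^*w_nv)=(vw_nv^*)^*\,\Phi_B(vv^*)\,$ — or a variant of this shape — after inserting $w_n=v^*(vw_nv^*)v$ and regrouping. Once the $n$-dependence is confined to a unitary factor from $\U(B)$, the $2$-norm becomes constant in $n$, equal to $\|\Phi_B(vv^*)\|_2$, which is strictly positive because $v$ is a unitary and $\Phi_B$ is faithful-ish on positives (indeed $\Phi_B(vv^*)=\Phi_B(1)=1$ since $v$ is unitary in $M$ and $\Phi_B$ is unital). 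This forces $\|\Phi_B(v^*w_nv)\|_2=1\not\to 0$, contradicting (\ref{zerolimit}) and completing the contrapositive. The delicate point is thus purely algebraic bookkeeping with the bimodule property; the analysis is trivial once the unitary factor is extracted.
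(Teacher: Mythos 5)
Your overall strategy is the right one (and is the paper's): assume $v\in\U(M)$ with $vAv^*\subseteq B$ exists and test condition (\ref{zerolimit}) on the pair $\{v,v^*\}$, using the fact that $vw_nv^*$ is a unitary of $B$. But you have the conjugation in the wrong direction, and the repair you propose does not work. With your choice $x=v^*$, $y=v$ you must control $\Phi_B(v^*w_nv)$, and the element $v^*w_nv$ is \emph{not} in $B$ in general --- the hypothesis only puts $vAv^*$, not $v^*Av$, inside $B$. Substituting $w_n=v^*(vw_nv^*)v$ gives $v^*w_nv=(v^*)^2(vw_nv^*)v^2$, and since $v^2,(v^*)^2\notin B$ the bimodule property $\Phi_B(b_1mb_2)=b_1\Phi_B(m)b_2$ gives you nothing to pull out. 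The identity you conjecture, $\Phi_B(v^*w_nv)=(vw_nv^*)^*\Phi_B(vv^*)$, is false as an algebraic identity (already for $A=\Cb 1$ and $w_n=i$ the two sides are $i$ and $-i$), and I see no way to bound $\|\Phi_B(v^*w_nv)\|_2$ away from $0$ from the stated hypotheses; so as written the contradiction is not reached.

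The fix is immediate and is exactly what the paper does: take $x=v$ and $y=v^*$. Then $vw_nv^*\in\U(B)$, so $\Phi_B(vw_nv^*)=vw_nv^*$ because a conditional expectation restricts to the identity on $B$, and $\|vw_nv^*\|_2^2=\tau\bigl((vw_nv^*)^*(vw_nv^*)\bigr)=\tau(1)=1$ for all $n$. This contradicts (\ref{zerolimit}) with no bimodule bookkeeping at all. Please also note that no ``faithfulness'' of $\Phi_B$ is needed anywhere; the only facts used are $\Phi_B|_B=\id$ and that $\tau$ is a (unital) tracial state.
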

\begin{proof}
Indeed, let $w_n\in\U(A)$ be as in the lemma and suppose $v\in\U(M)$ is such that $vAv^*\subseteq B$. Then 
$$ 1=||vw_nv^*||_2=|| \Phi_B(vw_n v^*)||_2 \underset{n\to\infty}{\longrightarrow} 0,  $$
which gives a contradiction. 
\end{proof}


\section{Quasi-free automorphisms}

In this section, we apply Lemma \ref{calgibbm} with $M=C^*(E)$, $\tau$ the canonical trace on $\F_E$, $B=\D_E$, and $\Phi_B=\Phi_\D$. 
We keep the standing assumptions on the graph $E$. 
Note that for unitaries $u\in\Bf$ and $d\in\D_E^1$ we have 
$$
\lambda_u(d\varphi(d)\cdots \varphi^{k-1}(d)) = udu^*\varphi(udu^*)\cdots \varphi^{k-1}(udu^*). 
$$

\begin{lemma}\label{dlemma}
Let $u\in\Bf$ be a unitary such that $u\D_E^1u^*\neq \D_E^1$, and let $d\in\D_E^1$ be a unitary such that $udu^*\not\in\D_E^1$. 
Then we have 
$$
\lim_{k\to\infty}|| \Phi_{\D}(udu^*\varphi(udu^*)\cdots \varphi^{k-1}(udu^*)) ||_2 = 0. 
$$
\end{lemma}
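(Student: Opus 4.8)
The plan is to reduce the norm in question to a weighted count of paths in $E$, governed by a nonnegative matrix that is dominated by the adjacency matrix $A$, and then to extract decay from strict Perron--Frobenius monotonicity. Set $a:=udu^{*}$. Since $d\in\D_E^1\subseteq\Bf$ and $u\in\Bf$, we have $a\in\Bf$, so $a=\bigoplus_{v,w\in E^0}a_{vw}$ with each $a_{vw}$ a \emph{unitary} in $\Bf\,{}_vQ_w\cong M_{A(v,w)}(\Cb)$; unitarity of the blocks follows from $a^{*}a=aa^{*}=1$ and the orthogonality of the central projections ${}_vQ_w$. Writing $\Pi_k:=a\varphi(a)\cdots\varphi^{k-1}(a)\in\Bf_E^k$, I would first use the tensor-product decomposition of $\Bf_E^k$ displayed after \eqref{prQ}, together with \eqref{prQ} itself, to see that in the block indexed by $(v_1,\dots,v_{k+1})$ the element $\Pi_k$ is exactly the elementary tensor $a_{v_1v_2}\otimes\varphi(a_{v_2v_3})\otimes\cdots\otimes\varphi^{k-1}(a_{v_kv_{k+1}})$; all cross terms vanish because ${}_vQ_w\varphi({}_{v'}Q_{w'})=0$ unless $w=v'$.

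Next I would identify $\Phi_{\D}$ on $\Bf_E^k$. Inside each block $\D_E^k$ is the tensor product of the diagonal subalgebras of the matrix factors, and since $\Phi_{\D}(S_\mu S_\nu^{*})=\delta_{\mu\nu}P_\mu$ and $\Phi_{\D}\circ\varphi=\varphi\circ\Phi_{\D}$ on $\F_E$, the restriction of $\Phi_{\D}$ is the tensor product of the projections onto the diagonal of each factor. Applying this to the elementary tensor above and writing $\Phi_{\D}(a)=\sum_{e\in E^1}t_eP_e$ (so that $t_e$ is the $(e,e)$ diagonal entry of the unitary $a_{s(e)r(e)}$, in particular $|t_e|\le1$), I obtain the clean multiplicative formula
\[
\Phi_{\D}(\Pi_k)=\Phi_{\D}(a)\,\varphi(\Phi_{\D}(a))\cdots\varphi^{k-1}(\Phi_{\D}(a))=\sum_{\mu=(e_1,\dots,e_k)\in E^k}\Big(\prod_{j=1}^{k}t_{e_j}\Big)P_\mu .
\]

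Since the $P_\mu$, $\mu\in E^k$, are mutually orthogonal projections with $\tau(P_\mu)=x(r(\mu))/(X\beta^k)$ by \eqref{trace}, I can then compute
\[
\|\Phi_{\D}(\Pi_k)\|_2^2=\sum_{\mu=(e_1,\dots,e_k)\in E^k}\Big(\prod_{j=1}^{k}|t_{e_j}|^2\Big)\frac{x(r(\mu))}{X\beta^k}=\frac{1}{X\beta^k}\,\mathbf{1}^{T}W^{k}x ,
\]
where $x=(x(v))_v$ is the Perron--Frobenius eigenvector, $\mathbf 1$ the all-ones vector, and $W$ the weighted adjacency matrix $W(v,w):=\sum_{e:\,s(e)=v,\,r(e)=w}|t_e|^2$; the last equality uses that $W^k(v,w)$ is the sum of $\prod_j|t_{e_j}|^2$ over length-$k$ paths from $v$ to $w$. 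Because $|t_e|\le1$ we have $0\le W\le A$ entrywise. The hypothesis $udu^{*}\notin\D_E^1$ forces some block $a_{v_0w_0}$ to be a non-diagonal unitary, and a unitary all of whose diagonal entries have modulus one is diagonal; hence there is an edge $e_0$ with $|t_{e_0}|<1$, giving $W(v_0,w_0)<A(v_0,w_0)$ and so $W\ne A$. As $A$ is irreducible ($E$ is transitive), strict Perron--Frobenius monotonicity yields $\rho(W)<\rho(A)=\beta$, whence $\beta^{-k}\mathbf 1^{T}W^{k}x\to0$, which is the claim.

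I expect the main obstacle to be this last step: proving, and invoking in the exact form needed, the \emph{strict} inequality $\rho(W)<\beta$. The non-strict bound $\rho(W)\le\beta$ is immediate from $W\le A$, but strictness genuinely uses irreducibility — the single deficiency at $(v_0,w_0)$ must be propagated through the whole strongly connected graph. Concretely, I would pair the Perron eigenvector $x>0$ of $A$ with a nonnegative left eigenvector $z$ of $W$ for $\rho(W)$: the identity $z^{T}(A-W)x=(\beta-\rho(W))z^{T}x$ shows that equality $\rho(W)=\beta$ would force $z^{T}(A-W)x=0$, hence (all terms being nonnegative and $x>0$) a zero entry $z(v_0)=0$, which then contradicts irreducibility. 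This is the only place where transitivity of $E$ is essential; the remaining steps are bookkeeping with the tensor decomposition already available in the preliminaries.
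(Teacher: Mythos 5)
Your proof is correct, but it takes a genuinely different route from the paper's. The paper bounds $\|\Phi_\D(udu^*\varphi(udu^*)\cdots\varphi^{k-1}(udu^*))\|_2^2$ from above by $\sum_{v}\frac{x(v)}{X\beta^k}\sum_i |M^i_{k,v}|\,c^i$, where $M^i_{k,v}$ is the set of length-$k$ paths into $v$ that use an edge from $w_1$ to $w_2$ exactly $i$ times and $c<1$ is the squared block norm of $\Phi_\D(udu^*\cdot{}_{w_1}Q_{w_2})$; it then shows $|M^i_{k,v}|/\beta^k\to 0$ for each fixed $i$ by reducing to the modified integer matrix $A_1$ of its Theorem 6.5 (Perron--Frobenius eigenvalue strictly drops when one entry is decreased), and finishes by splitting the sum at an arbitrary threshold $i_0$. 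You instead compute the norm \emph{exactly} as $(X\beta^k)^{-1}\mathbf{1}^{T}W^{k}x$ for the real weighted matrix $W(v,w)=\sum_{e:v\to w}|t_e|^2$, observe $0\le W\le A$ with $W\neq A$ (your observation that a unitary block with all diagonal entries of modulus one is diagonal is exactly the right way to extract the deficiency from the hypothesis $udu^*\notin\D_E^1$), and invoke strict monotonicity $\rho(W)<\rho(A)=\beta$. Both arguments ultimately rest on the same Perron--Frobenius principle --- the paper's Theorem 6.5 is precisely the strict-monotonicity statement for the special matrix $A_1$ --- but your packaging eliminates the combinatorial bookkeeping with $M^i_{k,v}$ and the $i_0$-splitting, replaces an upper bound by an identity, and makes visible exactly where transitivity of $E$ is used. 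The one place you should write out in full is the final propagation step: from $z^{T}(A-W)x=0$ with $z\ge0$, $x>0$, $A-W\ge0$ you get $z^{T}(A-W)=0$, hence $z^{T}A=\beta z^{T}$, so $z$ is a nonnegative left Perron eigenvector of the irreducible matrix $A$ and therefore strictly positive, contradicting $z(v_0)=0$; this completes your sketch and is standard, so there is no genuine gap.
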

\begin{proof}
We set $d_{v,w}:=d\cdot_vQ_w$. Since $\Bf\cdot_vQ_w$ is a full matrix algebra, it has a unique tracial state 
$\tau_{v,w}$. We denote by 
$||\cdot||_{2,v,w}$ the 2-norm induced by this trace. In view of Corollary \ref{ce}, below, we have 
$$ \begin{aligned}
\Phi_\D(udu^* & \varphi(udu^*)\cdots \varphi^{k-1}(udu^*)) \\
& = \sum_{v_1,v_2,\ldots,v_{k+1}\in E^0} \Phi_\D(udu^*_{v_1}Q_{v_2}\varphi(udu^*_{v_2}Q_{v_3})\cdots \varphi^{k-1}(udu^*_{v_k}Q_{v_{k+1}})) \\
& = \sum_{v_1,v_2,\ldots,v_{k+1}\in E^0} \Phi_\D(ud_{v_1,v_2}u^*) \varphi(\Phi_\D( ud_{v_2,v_3}u^*)) \cdots \varphi^{k-1}(\Phi_\D(ud_{v_k,v_{k+1}}u^*))
\end{aligned} $$
We define non-negative numbers $\{ \lambda_{v_1,v_2,\ldots,v_{k+1}} \}_{v_1,v_2,\ldots,v_{k+1}\in E^0}$ by 
$$ \begin{aligned}
 \lambda_{v_1,v_2,\ldots,v_{k+1}} & = \tau(_{v_1}Q_{v_2} \varphi(_{v_2}Q_{v_3}) \cdots \varphi^{k-1}(_{v_k}Q_{v_{k+1}})) \\
 & = A(v_1,v_2) A(v_2,v_3) \cdots A(v_k,v_{k+1}) \frac{x(v_{k+1})}{X\beta^k}.
\end{aligned} $$ 
We remark that $A(v_1,v_2) A(v_2,v_3) \cdots A(v_k,v_{k+1})$ is the total number of paths  of length $k$ which pass through $v_1,v_2,\ldots,v_{k+1}$ 
in this order. 
Since $_{v_1}Q_{v_2} \varphi(_{v_2}Q_{v_3}) \cdots \varphi^{k-1}(_{v_k}Q_{v_{k+1}})$ is a central minimal 
projection of $\Bf_E^k$, for any $x\in\Bf_E^k$, we have 
$$
\tau(x)=\sum_{v_1,v_2,\ldots,v_{k+1}\in E^0} \lambda_{v_1,v_2,\ldots,v_{k+1}} 
\tau_{{v_1,v_2,\ldots,v_{k+1}}}(x\{_{v_1}Q_{v_2} \varphi(_{v_2}Q_{v_3}) \cdots \varphi^{k-1}(_{v_k}Q_{v_{k+1}})\})
$$
where $\tau_{{v_1,v_2,\ldots,v_{k+1}}}$ is a unique tracial state on a full matrix algebra 
$${\Bf_E^k} \{{}_{v_1}Q_{v_2} \varphi(_{v_2}Q_{v_3}) \cdots\varphi^{k-1}(_{v_k}Q_{v_{k+1}})\}.$$ 
Then since 
$$ \begin{aligned}
\tau_{{v_1,v_2,\ldots,v_{k+1}}}&( a_1\varphi(a_2)\cdots\varphi^{k-1}(a_k)
\{{}_{v_1}Q_{v_2} \varphi(_{v_2}Q_{v_3}) \cdots \varphi^{k-1}(_{v_k}Q_{v_{k+1}})\}
)\\
&=\tau_{v_1,v_2}(a_1 \cdot _{v_1}Q_{v_2})\tau_{v_2,v_3}(a_2 \cdot _{v_2}Q_{v_3})\cdots 
\tau_{v_k,v_{k+1}}(a_k \cdot _{v_k}Q_{v_{k+1}})
\end{aligned} 
$$ 
for all $a_1,a_2,\ldots,a_k\in\Bf$, we have  
$$ \begin{aligned}
&  || a_1\varphi(a_2)\cdots\varphi^{k-1}(a_k) ||_2^2\\
=\sum_{v_1,v_2,\ldots,v_{k+1}\in E^0}  \lambda_{v_1,v_2,\ldots,v_{k+1}} &  || a_1 \cdot _{v_1}Q_{v_2} ||^2_{2,v_1,v_2} 
|| a_2 \cdot _{v_2}Q_{v_3} ||^2_{2,v_2,v_3} \cdots || a_k \cdot _{v_k}Q_{v_{k+1}} ||^2_{2,v_k,v_{k+1}}.
\end{aligned} $$ 
Thus we see that 
$$ \begin{aligned}
 || \Phi_\D(udu^* \varphi(udu^*)&\cdots \varphi^{k-1}(udu^*)) ||_2^2 
= || \Phi_\D(udu^*) \varphi(\Phi_\D(udu^*))\cdots \varphi^{k-1}(\Phi_\D(udu^*)) ||_2^2 \\
 = \sum_{v_1,v_2,\ldots,v_{k+1}\in E^0} &  \lambda_{v_1,v_2,\ldots,v_{k+1}} 
||\Phi_\D(ud_{v_1,v_2}u^*)||^2_{2,v_1,v_2} ||\varphi(\Phi_\D( ud_{v_2,v_3}u^*))||^2_{2,v_2,v_3} \cdots \\ 
& \cdots ||\varphi^{k-1}(\Phi_\D(ud_{v_k,v_{k+1}}u^*)) ||^2_{2,v_k,v_{k+1}}.
\end{aligned} $$

By the hypothesis of the lemma, there exist two vertices $w_1,w_2$ such that 
$$
\begin{aligned}
 &0<||udu^*\cdot _{w_1}Q_{w_2}- \Phi_\D( udu^*\cdot _{w_1}Q_{w_2}) ||^2_{2,w_1,w_2}\\
&=||udu^*\cdot _{w_1}Q_{w_2}||^2_{2,w_1,w_2}+||\Phi_\D( udu^*\cdot _{w_1}Q_{w_2}) ||^2_{2,w_1,w_2}
-2{\rm Re}\tau_{w_1,w_2}(\{udu^*\cdot _{w_1}Q_{w_2}\}^*\Phi_\D( udu^*\cdot _{w_1}Q_{w_2}))\\
&=1+||\Phi_\D( udu^*\cdot _{w_1}Q_{w_2}) ||^2_{2,w_1,w_2}
-2{\rm Re}\tau_{w_1,w_2}(\Phi_\D( udu^*\cdot _{w_1}Q_{w_2})^*\Phi_\D( udu^*\cdot _{w_1}Q_{w_2}))\\
&=1- || \Phi_\D( udu^*\cdot _{w_1}Q_{w_2}) ||^2_{2,w_1,w_2}
\end{aligned}
$$ 
and hence 
\begin{equation}\label{cconstant}
c:= || \Phi_\D( udu^*\cdot _{w_1}Q_{w_2}) ||^2_{2,w_1,w_2} < 1.
\end{equation}
For $i=0,1,\ldots,k$, we denote by $M_{k,v}^i$ the set of all paths $\mu$ such that 

\vspace{2mm}\noindent(i)
$|\mu|=k$, 

\vspace{2mm}\noindent(ii)
$r(\mu)=v$,

\vspace{2mm}\noindent(iii)
in path $\mu$, edges from $w_1$ to $w_2$ occur exactly $i$ times. 

\vspace{2mm}\noindent
We remark that $M_{k,v}^i \cap M_{k,v}^j=\emptyset$ if $i\neq j$. Thus we have $\sum_{i=0}^k|M_{k,v}^i| = \sum_{w\in E^0}A^k(w,v)$, 
where $|M_{k,v}^i|$ denotes the cardinality of $M_{k,v}^i$. We claim that for all $v$ and $i$
\begin{equation}\label{mlemma}
\lim_{k\to\infty} \frac{|M_{k,v}^i|}{\beta^k} =0.
\end{equation}
At first we note that because of (\ref{cconstant}) the full matrix algebra $\Bf\cdot _{w_1}Q_{w_2}$ is not isomorphic to $\Cb$, and hence $A(w_1,w_2)\geq 2$. 
Let $A_1$ be the matrix defined in (\ref{a1}) in section 6 below for $(i_1,j_1)=(w_1,w_2)$, and let $E_1$ be the corresponding graph.  $E_1$ may be viewed 
as a subgraph of $E$ obtained by removing all but one edge in $E^1$ that begin at $w_1$ and end at $w_2$. Set $N_{k,v}^i := M_{k,v}^i\cap E_1^*$. 
It is easy to see that 
$$
|M_{k,v}^i| = |N_{k,v}^i|\cdot A(w_1,w_2)^i. 
$$
But now, by virtue of Theorem \ref{pf3} below, we have 
$$
\frac{|M_{k,v}^i|}{\beta^k} = A(w_1,w_2)^i \cdot \frac{|N_{k,v}^i|}{\beta^k} \leq  A(w_1,w_2)^i \cdot \frac{\sum_{w}A_1^k(v,w)}{\beta^k} 
\underset{k\to\infty}{\longrightarrow} 0, 
$$
and the claim holds. 

Now, since $||\varphi^{j-1}(\Phi_{\mathcal D}(ud_{v_j,v_{j+1}}u^*)) ||^2_{2,v_j,v_{j+1}}\leq 1$ and 
$c= || \Phi_{\mathcal D}( udu^*\cdot _{w_1}Q_{w_2}) ||^2_{2,w_1,w_2}$, for each $i_0$ we have 
$$ \begin{aligned}
 ||  \Phi_\D( & udu^*  \varphi(udu^*)\cdots \varphi^{k-1}(udu^*)) ||_2^2 \leq \sum_{v\in E^0} \frac{x(v)}{X\beta^k} \sum_{i=0}^k |M_{k,v}^i| c^i \\ 
& =   \sum_{v\in E^0} \frac{x(v)}{X\beta^k} \sum_{i=0}^{i_0} |M_{k,v}^i| c^i + \sum_{v\in E^0} \frac{x(v)}{X\beta^k} \sum_{i=i_0+1}^{k}|M_{k,v}^i|c^i, 
\end{aligned} $$
and hence 
$$
\limsup_{k\to\infty} ||  \Phi_\D(udu^*  \varphi(udu^*)\cdots \varphi^{k-1}(udu^*)) ||_2^2 = \limsup_{k\to\infty} 
\sum_{v\in E^0} \frac{x(v)}{X\beta^k} \sum_{i=i_0+1}^{k}|M_{k,v}^i|c^i. 
$$
Since 
$$ \begin{aligned}
\sum_{v\in E^0} \frac{x(v)}{X\beta^k} & \sum_{i=i_0+1}^{k}|M_{k,v}^i|c^i = 
c^{i_0}\sum_{v\in E^0} \frac{x(v)}{X\beta^k} \sum_{i=i_0+1}^{k}|M_{k,v}^i|c^{i-i_0} \\
\leq c^{i_0} \sum_{v\in E^0} & \frac{x(v)}{X\beta^k}  \sum_{i=i_0+1}^{k}|M_{k,v}^i| \leq 
c^{i_0} \sum_{v\in E^0} \frac{x(v)}{X\beta^k} \sum_{w\in E^0}A^{k}(w,v) \\ 
& = c^{i_0} \frac{1}{X\beta^k} \sum_{w\in E^0}  \sum_{v\in E^0}A^{k}(w,v)x(v) = c^{i_0},  
\end{aligned} $$
we may conclude that 
$$
\limsup_{k\to\infty} ||  \Phi_\D(udu^*  \varphi(udu^*)\cdots \varphi^{k-1}(udu^*)) ||_2^2 \leq c^{i_0}. 
$$
Since $i_0$ was arbitrary, the lemma is proved. 
\end{proof}

Keeping the hypothesis of Lemma \ref{dlemma}, we have the following.

\begin{lemma}\label{flemma}
For all $x,y\in\F_E$ we have 
$$
\lim_{k\to\infty} ||  \Phi_\D(x\cdot udu^*  \varphi(udu^*)\cdots \varphi^{k-1}(udu^*) \cdot y) ||_2 = 0. 
$$
\end{lemma}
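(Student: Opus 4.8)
The plan is to bootstrap the general statement from Lemma \ref{dlemma}, which is exactly the case $x=y=1$. Throughout write
$$
b_k:=udu^*\,\varphi(udu^*)\cdots\varphi^{k-1}(udu^*),
$$
so that each $b_k$ is a unitary in $\Bf_E^k\subseteq\F_E^k$ and Lemma \ref{dlemma} reads $\|\Phi_\D(b_k)\|_2\to0$. The whole idea is to exploit the grading of $\F_E$ together with the relative commutant description $\varphi^j(\Bf)=(\F_E^j)'\cap\F_E^{j+1}$ from (\ref{fekrelcomm}), and to reduce to $x,y$ of bounded ``length''.

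First I would record two elementary facts for the $2$-norm coming from $\tau$ on $\F_E$: since $\Phi_\D$ is $\tau$-preserving it contracts $\|\cdot\|_2$, and $\|azc\|_2\le\|a\|\,\|c\|\,\|z\|_2$ for all $a,c\in\F_E$. As $\|b_k\|_2=1$, these give the estimate $\|\Phi_\D(xb_ky)-\Phi_\D(x'b_ky')\|_2\le\|x-x'\|\,\|y\|+\|x'\|\,\|y-y'\|$, which is uniform in $k$. Since $\F_E$ is the norm closure of $\bigcup_m\F_E^m$, a routine $\varepsilon$-argument then reduces the claim to the case $x,y\in\F_E^m$ for one fixed $m$.

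Fix such an $m$ and let $k>m$. The key manipulation is to factor $b_k=c_m\,\varphi^m(b_{k-m})$, where $c_m:=udu^*\cdots\varphi^{m-1}(udu^*)\in\Bf_E^m\subseteq\F_E^m$ is unitary and $b_{k-m}$ is the analogous product of length $k-m$. For every $j\ge m$ the factor $\varphi^j(udu^*)$ lies in $\varphi^j(\Bf)=(\F_E^j)'\cap\F_E^{j+1}\subseteq(\F_E^m)'$, so $\varphi^m(b_{k-m})$ commutes with $y\in\F_E^m$. Hence $xb_ky=w\,\varphi^m(b_{k-m})$ with $w:=xc_my\in\F_E^m$ and $\|w\|\le\|x\|\,\|y\|$. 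I would then establish the pull-through identity
$$
\Phi_\D\bigl(w\,\varphi^m(b_{k-m})\bigr)=\Phi_\D(w)\,\varphi^m\bigl(\Phi_\D(b_{k-m})\bigr)
$$
by evaluating on spanning elements $w=S_\mu S_\nu^*$ and $b_{k-m}=S_\sigma S_\rho^*$: the product equals $S_{\mu\sigma}S_{\nu\rho}^*$, and because $|\mu|=|\nu|=m$ the surviving diagonal terms $\mu\sigma=\nu\rho$ force $\mu=\nu$ and $\sigma=\rho$ separately, which is the same mechanism underlying Corollary \ref{ce}.

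Combining these, $\|\Phi_\D(xb_ky)\|_2\le\|w\|\,\|\varphi^m(\Phi_\D(b_{k-m}))\|_2$. It remains to compare $\|\varphi^m(z)\|_2$ with $\|z\|_2$ for $z=\Phi_\D(b_{k-m})\in\D_E$, and this is the only genuine obstacle, flagged in Remark \ref{traceshift}: the trace $\tau$ need not be $\varphi$-invariant. I would resolve it with the elementary Perron-Frobenius bound $\sum_wA^m(w,v)\le C_m\beta^m$, where $C_m:=\beta^{-m}\max_v\sum_wA^m(w,v)<\infty$; for diagonal $a\ge0$ this gives $\tau(\varphi^m(a))\le C_m\tau(a)$, hence $\|\varphi^m(z)\|_2\le\sqrt{C_m}\,\|z\|_2$. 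Crucially $m$ is fixed \emph{before} letting $k\to\infty$, so $C_m$ is a harmless constant, and we conclude
$$
\|\Phi_\D(xb_ky)\|_2\le\sqrt{C_m}\,\|x\|\,\|y\|\,\|\Phi_\D(b_{k-m})\|_2\underset{k\to\infty}{\longrightarrow}0
$$
by Lemma \ref{dlemma}. Thus the main difficulty, namely the shift's distortion of the non-invariant trace, is neutralised by carrying out the approximation step first and only afterwards letting $k$ grow.
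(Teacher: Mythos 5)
Your proof is correct and follows essentially the same route as the paper's: reduce to $x,y\in\F_E^m$, commute the tail $\varphi^m(b_{k-m})$ past $y$, factor the conditional expectation as in Lemma \ref{productcondexp}, and absorb the distortion of the non-$\varphi$-invariant trace into a constant depending only on the fixed $m$ before letting $k\to\infty$. The only cosmetic difference is that you bound $\tau(\varphi^m(\cdot))$ on $\D_E$ by the elementary constant $\beta^{-m}\max_v\sum_w A^m(w,v)$, whereas the paper invokes Lemma \ref{newlemma} via the Perron--Frobenius eigenvector of the transpose matrix; both yield a harmless $m$-dependent factor.
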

\begin{proof}
To prove the lemma, it suffices to consider elements $x,y\in\F_E^p$ for an arbitrary positive integer $p$. We have 
$$ \begin{aligned}
 & x\cdot udu^*  \varphi(udu^*)\cdots \varphi^{k-1}(udu^*) \cdot y \\
 & = (x\cdot udu^*  \varphi(udu^*)\cdots \varphi^{p-1}(udu^*) \cdot y) \cdot 
\varphi^p(udu^* \varphi^{1}(udu^*)\cdots \varphi^{k-1}(udu^*)). 
\end{aligned} $$
Therefore it is enough to show that 
$$
\lim_{k\to\infty} ||  \Phi_\D(x\cdot \varphi^p(udu^*  \varphi(udu^*)\cdots \varphi^{k-1}(udu^*) )) ||_2 = 0 
$$
for all $x\in\F_E^p$. However, we have 
$$
\Phi_\D(x\cdot \varphi^p(udu^*  \varphi(udu^*)\cdots \varphi^{k-1}(udu^*) )) = 
\Phi_\D(x)\cdot \varphi^p(\Phi_\D(udu^*  \varphi(udu^*)\cdots \varphi^{k-1}(udu^*) )) 
$$
by Lemma \ref{productcondexp} below and 
$$
\lim_{k\to\infty} || \varphi^p(\Phi_\D(udu^*  \varphi(udu^*)\cdots \varphi^{k-1}(udu^*) )) ||_2 = 0
$$
by Lemma \ref{dlemma} and Lemma \ref{newlemma}, below. Thus the claim follows. 
\end{proof}

Now we are ready to prove the main result of this section. We keep the standard assumptions on the graph $E$.

\begin{theorem}\label{qfree}
Let $u\in\Bf$ be a unitary such that $u\D_E^1u^*\neq \D_E^1$, and let $d\in\D_E^1$ be a unitary such that $udu^*\not\in\D_E^1$. 
Then for all $x,y\in C^*(E)$ we have 
$$
\lim_{k\to\infty}|| \Phi_{\D}(x\cdot udu^*\varphi(udu^*)\cdots \varphi^{k-1}(udu^*) \cdot y) ||_2 = 0. 
$$
Thus, in view of Lemma \ref{calgibbm}, $\D_E$ and $\lambda_u(\D_E)$ are not inner conjugate in $C^*(E)$.
\end{theorem}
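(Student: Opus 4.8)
The plan is to establish the displayed vanishing of $\|\Phi_\D(x\,w_k\,y)\|_2$, where I abbreviate $w_k:=udu^*\varphi(udu^*)\cdots\varphi^{k-1}(udu^*)=\lambda_u(d\varphi(d)\cdots\varphi^{k-1}(d))$, and then to quote Lemma~\ref{calgibbm} with $M=C^*(E)$, $B=\D_E$, $\Phi_B=\Phi_\D$ and $A=\lambda_u(\D_E)$. Since $d\varphi(d)\cdots\varphi^{k-1}(d)$ is a unitary in $\D_E$, each $w_k$ is a unitary in $\lambda_u(\D_E)\subseteq\F_E$ with $\|w_k\|=1$, so the sequence $(w_k)$ furnishes the unitaries $w_n\in\U(A)$ required by the criterion. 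The whole point is therefore to upgrade Lemma~\ref{flemma}, which already gives the limit for $x,y\in\F_E$, to arbitrary $x,y\in C^*(E)$.

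First I would reduce to monomials. Because $\Phi_\D$ is norm-contractive, $\|w_k\|=1$, and $\|\Phi_\D(c)\|_2\le\|c\|$ for every $c\in C^*(E)$ (as $\tau$ is a state on $\F_E\supseteq\D_E$), the map $(x,y)\mapsto\|\Phi_\D(x\,w_k\,y)\|_2$ is jointly Lipschitz in $x,y$ uniformly in $k$; hence it suffices to prove the limit for $x,y$ in the dense $*$-algebra $\span\{S_\mu S_\nu^*\}$, and by bilinearity for single monomials $x=S_\mu S_\nu^*$, $y=S_\sigma S_\rho^*$. Each monomial is gauge-homogeneous, and since $w_k\in\F_E=C^*(E)^{(0)}$ and $\Phi_\D=\Phi_\D\circ\Phi^0$, the expectation vanishes unless $(|\mu|-|\nu|)+(|\sigma|-|\rho|)=0$. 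So I only have to treat balanced pairs, and there are exactly two cases according to whether $m:=|\mu|-|\nu|$ is $\ge 0$ or $<0$.

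In the case $m\ge 0$ I would peel off an initial segment, writing $x=S_\alpha a$ with $|\alpha|=m$, $a\in\F_E$, and $y=bS_\beta^*$ with $|\beta|=m$, $b\in\F_E$. Since $\Phi_\D(S_\zeta S_\eta^*)=0$ unless $\zeta=\eta$, a direct expansion gives $\Phi_\D(S_\alpha z S_\beta^*)=\delta_{\alpha,\beta}\,S_\alpha\Phi_\D(z)S_\alpha^*$ for $z\in\F_E$, so with $z=a\,w_k\,b$ the expectation collapses onto the corner $\delta_{\alpha,\beta}\,S_\alpha\Phi_\D(a\,w_k\,b)S_\alpha^*$. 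The trace formula (\ref{trace}) yields the compression estimate $\|S_\alpha\zeta S_\alpha^*\|_2\le\beta^{-m/2}\|\zeta\|_2$ for $\zeta\in\D_E$, whence $\|\Phi_\D(x\,w_k\,y)\|_2\le\beta^{-m/2}\|\Phi_\D(a\,w_k\,b)\|_2\to 0$ by Lemma~\ref{flemma}, because $a,b\in\F_E$.

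The case $m<0$, say $p:=-m>0$, is the genuine obstacle: peeling now produces $x=bS_\gamma^*$ and $y=S_{\gamma'}c$ with $b,c\in\F_E$ and $|\gamma|=|\gamma'|=p$, and the ``sandwich'' $S_\gamma^*w_kS_{\gamma'}$ is no longer of the special product form to which Lemma~\ref{flemma} applies. To get around this I would split $w_k=v_p\,\varphi^p(w_{k-p})$, where $v_p=udu^*\cdots\varphi^{p-1}(udu^*)\in\F_E^p$ and $w_{k-p}=udu^*\cdots\varphi^{k-p-1}(udu^*)$ is again of the special form. Using the covariance $\varphi^p(\zeta)S_{\gamma'}=S_{\gamma'}\zeta P_{r(\gamma')}$ and the elementary identity $S_\gamma^*v_pS_{\gamma'}=c_{\gamma,\gamma'}P_{r(\gamma)}$ (a scalar times a vertex projection, nonzero only when $r(\gamma)=r(\gamma')$), the sandwich collapses to $c_{\gamma,\gamma'}P_{r(\gamma)}w_{k-p}P_{r(\gamma')}$, so that $\Phi_\D(x\,w_k\,y)=c_{\gamma,\gamma'}\Phi_\D(b'\,w_{k-p}\,c')$ with $b'=bP_{r(\gamma)}$ and $c'=P_{r(\gamma')}c\in\F_E$. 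Since $k-p\to\infty$, Lemma~\ref{flemma} again forces this to $0$. Having verified the limit in both cases, I would finally invoke Lemma~\ref{calgibbm}: the unitaries $w_k\in\U(\lambda_u(\D_E))$ satisfy (\ref{zerolimit}), so no $v\in\U(C^*(E))$ obeys $v\lambda_u(\D_E)v^*\subseteq\D_E$; as $\D_E$ and $\lambda_u(\D_E)$ are both MASAs, they are not inner conjugate. The main difficulty to document carefully is precisely this splitting in the $m<0$ case, together with verifying that the compression and covariance identities hold verbatim for the relevant finite-degree elements and extend by continuity.
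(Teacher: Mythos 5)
Your proposal is correct and follows essentially the same route as the paper: reduce to (gauge-homogeneous) monomials, use the grading to keep only balanced terms, collapse the $S_\gamma^*\,w_k\,S_{\gamma'}$ sandwich to a scalar multiple of $P_{r(\gamma)}$ times a shorter product, and feed everything into Lemma~\ref{flemma}; the paper organizes the same computation via polarization and a three-term decomposition of $x$, and in your case $m\geq 0$ it bounds $\|\Phi_\D(S_\nu\xi S_\nu^*)\|_2$ through $\varphi^{|\nu|}$ and Lemma~\ref{newlemma} rather than your (equally valid) exact scaling $\tau(S_\alpha\eta S_\alpha^*)=\beta^{-|\alpha|}\tau(\eta P_{r(\alpha)})$. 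The one step you should spell out is the ``elementary identity'' $S_\gamma^*v_pS_{\gamma'}=c_{\gamma,\gamma'}\delta_{r(\gamma),r(\gamma')}P_{r(\gamma)}$: it holds because $v_p\in\Bf_E^p$ and $S_\gamma S_\gamma^*$, $S_{\gamma'}S_{\gamma'}^*$ are \emph{minimal} projections of $\Bf_E^p$ (not merely of $\D_E^p$), which is exactly the observation the paper makes in its case (1).
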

\begin{proof}
By the polarization identity, it suffices to compute the above limit in the case $y=x^*$. Furthermore, we may assume that $x$ belongs to the dense 
$*$-subalgebra of $C^*(E)$ generated by partial isometries corresponding to finite paths. That is, in the case $x$ is a finite sum of the form 
$$
x = \sum_{\mu\in E^*} a_\mu S_\mu^* + x_0    + \sum_{\nu\in E^*} S_\nu b_\nu,
$$
with $x_0,a_\mu,b_\nu\in\F_E$. Applying conditional expectation $\Phi_\F$ on the core AF-subalgebra first, we get
$$ \begin{aligned}
 \Phi_\F(x\cdot udu^*\varphi(udu^*)\cdots \varphi^{k-1}(udu^*) \cdot x^*) 
 = & \sum_{|\mu|=|\mu'|} a_\mu S_\mu^* \cdot udu^*\varphi(udu^*)\cdots \varphi^{k-1}(udu^*) \cdot S_{\mu'}a_{\mu'}^* \\
+ &  x_0 \cdot udu^*\varphi(udu^*)\cdots \varphi^{k-1}(udu^*) \cdot x_0^* \\
+ &  \sum_{|\nu|=|\nu'|} S_\nu b_\nu \cdot udu^*\varphi(udu^*)\cdots \varphi^{k-1}(udu^*) \cdot b_{\nu'}^* S_{\nu'}^*.
\end{aligned} $$
Thus we must show the following three cases:
$$ \begin{aligned}
(1) \;\;& \lim_{k\to\infty}|| \Phi_{\D}( \sum_{|\mu|=|\mu'|} a_\mu S_\mu^* 
\cdot udu^*\varphi(udu^*)\cdots \varphi^{k-1}(udu^*) \cdot S_{\mu'}a_{\mu'}^* )||_2 = 0, \;\;\;\;\;\;\;\;  \;\;\;\;\;\;\;\;\;\;\;\;\;\;\;\;\;\;\;\;\;\;\;\; \\ 
(2) \;\;& \lim_{k\to\infty}|| \Phi_{\D}( x_0 \cdot udu^*\varphi(udu^*)\cdots \varphi^{k-1}(udu^*) \cdot x_0^* )||_2 = 0, \\
(3) \;\; & \lim_{k\to\infty}|| \Phi_{\D}( \sum_{|\nu|=|\nu'|} S_\nu b_\nu \cdot udu^*\varphi(udu^*)\cdots 
\varphi^{k-1}(udu^*) \cdot b_{\nu'}^* S_{\nu'}^* )||_2 = 0.
\end{aligned} $$
Ad (1). Consider two paths $\mu$ and $\mu'$ with $|\mu|=|\mu'|$. 
For any $x\in\Bf$ and for any $l-1\geq|\mu|=|\mu'|$, we see that 
$$
\varphi^{l-1}(x)S_{\mu'}=\sum_{|\nu|=l-1}S_\nu xS_\nu^*S_{\mu'}
=\sum_{|\nu'|=l-1-|\mu'|}S_{\mu'}S_{\nu'} xS_{\nu'}^*S_{\mu'}^*S_{\mu'}
=S_{\mu'}\varphi^{l-1-|\mu|}(x).
$$
On the other hand, for any $l-1<|\mu|=|\mu'|$, 
since both $S_\mu S_\mu^*$ and $S_{\mu'} S_{\mu'}^*$ are minimal projections 
of $\Bf_E^{|\mu|}$ and 
$x\varphi(x)\cdots\varphi^{l-1}(x)\in \Bf_E^{|\mu|}$, 
we have 
$$ 
\begin{aligned}
S_{\mu}^*x\varphi(x)\cdots\varphi^{l-1}(x)S_{\mu'}&=
S_{\mu}^*(S_\mu S_\mu^*)x\varphi(x)\cdots\varphi^{l-1}(x)(S_{\mu'} S_{\mu'}^*)S_{\mu'}\\
&=S_{\mu}^*(tS_\mu S_{\mu'}^*)S_{\mu'}
=t\delta_{r(\mu),r(\mu')}
P_{r(\mu)}
\end{aligned}
$$
for some scalar $t\in\Cb$ with $|t|\leq ||x||^{l-1}$. 
Therefore for any $k>|\mu|=|\mu'|$ we see that 
$$
S_\mu^* 
\cdot udu^*\varphi(udu^*)\cdots \varphi^{k-1}(udu^*) \cdot S_{\mu'} = 
t\delta_{r(\mu),r(\mu')}
P_{r(\mu)}\cdot udu^*\varphi(udu^*)\cdots \varphi^{k-1-|\mu|}(udu^*)
$$ 
for some scalar  $t\in\Cb$ with $|t|\leq ||udu^*||^{k-1}=1$. 
Since $P_{r(\mu)}\in \D$, the claim follows from Lemma \ref{dlemma}.

\vspace{3mm}\noindent
Ad (2). This is shown in Lemma \ref{flemma}. 

\vspace{3mm}\noindent
Ad (3). If $\nu\neq\nu'$ then 
$$
\Phi_{\D}( S_\nu b_\nu \cdot udu^*\varphi(udu^*)\cdots \varphi^{k-1}(udu^*) \cdot b_{\nu'}^* S_{\nu'}^* ) =0. 
$$
Thus 
$$ \begin{aligned}
 & || \Phi_{\D}( \sum_{|\nu|=|\nu'|} S_\nu b_\nu \cdot udu^*\varphi(udu^*)\cdots 
\varphi^{k-1}(udu^*) \cdot b_{\nu'}^* S_{\nu'}^* )||_2 \\
 & \leq  \sum_{|\nu|=|\nu'|} || \Phi_{\D}( \varphi^{|\nu|}( b_\nu \cdot udu^*\varphi(udu^*)\cdots \varphi^{k-1}(udu^*) \cdot b_{\nu'}^* ) )||_2,
\end{aligned} $$
and this tends to $0$ as $k$ increases to infinity by the same argument as in the proof of Lemma \ref{flemma}. 
\end{proof}


\section{An application --- changing graphs}

The same graph $C^*$-algebra may often be presented by many different graphs, and the property of being quasi-free 
is usually not preserved when passing from one graph to another. This makes Theorem \ref{qfree} applicable 
to a much wider class of automorphisms than quasi-free ones. We illustrate this phenomenon with the following two examples. 

\subsection{Out-splitting}

Given a graph $E$ satisfying our standing assumption, we consider its out-split graph $E_s(\P)$, as defined by Bates and Pask in \cite{BP}. Namely, 
for each vertex $v\in E^0$ we partition the set of edges emitted by $v$, that is $s^{-1}(v)$, 
into $m(v)$ non-empty disjoint subsets $E^1_v, \ldots, E^{m(v)}_v$. 
Denote by $\P$ the resulting partition of $E^1$. The out-split graph $E_s(\P)$ has the following vertices, edges, source and range functions:
$$ \begin{aligned}
E_s(\P)^0 & = \{ v^i \mid v\in E^0, \, 1\leq i \leq m(v)  \}, \\
E_s(\P)^1 & = \{ e^j \mid e\in E^1, \, 1\leq j \leq m(r(e))  \}, \\
s(e^j) & = s(e)^i, \:\:\: \text{and} \:\:\: r(e^j)  = r(e)^j.
\end{aligned} $$
As shown in \cite[Theorem 3.2]{BP}, the $C^*$--algebras $C^*(E)$ and $C^*(E_s(\P))$ are isomorphic by an isomorphism which maps the diagonal 
MASA $\D_E$ of $C^*(E)$ onto the diagonal MASA $\D_{E_s(\P)}$ of $C^*(E_s(\P))$. However, the groups of quasi-free 
automorphims of $C^*(E)$ and $C^*(E_s(\P))$ may be different. For example, in the following case 
\[ \beginpicture
\setcoordinatesystem units <1.5cm,1.5cm>
\setplotarea x from -6 to 6, y from -1 to 1
\put {$\bullet$} at -4 0
\put {$\bullet$} at 0 0
\put {$\bullet$} at 2 0 
\put {$E$} at -5 1
\put {$E_s(\P)$} at 3 1
\put {out-splitting} at -2.3 0.3
\setlinear
\plot -3 0  -1.5 0 /
\arrow <0.25cm> [0.2,0.6] from -1.7 0 to -1.5 0
\setquadratic
\plot 0 0   1 0.1  2 0 /
\plot 0 0   1 -0.1  2 0 /
\circulararc 360 degrees from 2 0 center at 2.5 0
\circulararc 360 degrees from 0 0 center at -0.5 0
\circulararc 360 degrees from -4 1 center at -4 0.5
\circulararc 360 degrees from -4 -1 center at -4 -0.5
\arrow <0.25cm> [0.2,0.6] from 0.9 0.1 to 1.1 0.1
\arrow <0.25cm> [0.2,0.6] from 1.1 -0.1 to 0.9 -0.1
\arrow <0.25cm> [0.2,0.6] from -4.1 1 to -3.9 1
\arrow <0.25cm> [0.2,0.6] from -4.1 -1 to -3.9 -1
\arrow <0.25cm> [0.2,0.6] from 2.4 0.5 to 2.6 0.5
\arrow <0.25cm> [0.2,0.6] from -0.6 0.5 to -0.4 0.5
\endpicture \] 
the groups $\U(\Bf)$ in $C^*(E_s(\P))$ and $C^*(E)$ are isomorphic to $U(1)\times U(1)\times U(1)\times U(1)$ 
and $U(2)$, respectively. 

Thus the isomorphism $C^*(E)\cong C^*(E_s(\P))$ may identify a quasi-free 
automorphism of one of the two algebras with a non quasi-free automorphism of the other. In this way, our Theorem \ref{qfree} leads to 
non-trivial examples of non quasi-free automorphisms of graph algebras that map the diagonal MASA onto another MASA that is not inner conjugate to it. 


\subsection{Two graphs for $\OO_2$}

Consider the following graph:
\[ \beginpicture
\setcoordinatesystem units <1.5cm,1.5cm>
\setplotarea x from -2 to 3.5, y from -1.3 to 1.3
\put {$\bullet$} at 0 0
\put {$\bullet$} at 2 0 
\put {$E$} at -1.5 1
\put {$a$} at -0.7 0.5
\put {$b$} at -0.7 -0.5
\put {$c$} at 1.2 0.3
\put {$d$} at 3.2 0
\put {$e$} at 1.2 -0.3
\setquadratic
\plot 0 0   1 0.1  2 0 /
\plot 0 0   1 -0.1  2 0 /
\circulararc 360 degrees from 2 0 center at 2.5 0
\circulararc 360 degrees from 0 1 center at 0 0.5
\circulararc 360 degrees from 0 -1 center at 0 -0.5
\arrow <0.25cm> [0.2,0.6] from 0.9 0.1 to 1.1 0.1
\arrow <0.25cm> [0.2,0.6] from 1.1 -0.1 to 0.9 -0.1
\arrow <0.25cm> [0.2,0.6] from -0.1 1 to 0.15 1
\arrow <0.25cm> [0.2,0.6] from -0.1 -1 to 0.15 -1
\arrow <0.25cm> [0.2,0.6] from 2.4 0.5 to 2.6 0.5
\endpicture \] 
Then the graph algebra $C^*(E)$  is isomorphic to the Cuntz algebra $\OO_2=C^*(T_1,T_2)$, \cite{Cun77}. Here  $C^*(T_1,T_2)$ is the universal 
$C^*$-algebra for the relations $ 1 = T_1T_1^* + T_2T_2^* = T_1^*T_1 = T_2^*T_2$. That is, it is a graph algebra for the graph consisting of one 
vertex and two edges attached to it. The isomorphism between 
$C^*(E) = C^*(S_a,S_b,S_c,S_d,S_e)$ and $\OO_2=C^*(T_1,T_2)$ is obtained by the identification 
$$ S_a=T_{11}T_1^*, \;\;S_b=T_{121}T_1^*,\;\; S_c=T_{122}T_2^*,\;\; S_d=T_{22}T_2^*,\;\; S_e=T_{21}T_1^*. $$ 
The inverse map is given by 
$$ T_1=S_a + (S_b +S_c)(S_d + S_e)^*,\;\; T_2=S_d + S_e. $$ 
Note that this isomorphism carries the diagonal MASA $\D_E$ of $C^*(E)$ onto the standard diagonal MASA $\D_2$ of $\OO_2$. 
Indeed, it follows from the above definition that every product $x$ of the generators of $C^*(E)$ is mapped onto an element of the form $T_\alpha T_\beta^*$ 
in $\OO_2$, with $\alpha,\beta$ some words on the alphabet $\{1,2\}$. Thus the range projection $xx^*$ of that product is mapped onto a projection 
in the diagonal MASA $\D_2$ of $\OO_2$. Hence, the image of $\D_E$ is contained in $\D_2$. But under an 
isomorphism, the image of a MASA in $C^*(E)$ is a MASA in $\OO_2$. Thus the image of MASA $\D_E$ of $C^*(E)$ is the entire MASA $\D_2$ 
of $\OO_2$, as claimed.

Now, using the isomorphism above we may find a quasi-free automorphism of $C^*(E)$ such that the corresponding automorphism of $\OO_2$ 
is not quasi-free and yet carries the diagonal MASA $\D_2$ of $\OO_2$ onto a MASA which is not inner conjugate to $\D_2$. Indeed, let 
$$ \left[ \begin{array}{cc} \xi_{aa} & \xi_{ab} \\ \xi_{ba} & \xi_{bb} \end{array} \right] $$
be a unitary matrix whose all entries are non-zero complex numbers. Then 
$$ u=\xi_{aa}S_a S_a^* + \xi_{ab}S_a S_b^* + \xi_{ba}S_b S_a^* + \xi_{bb}S_b S_b^* + S_c S_c^* + S_d S_d^* + S_e S_e^* $$ 
is a unitary in $\F_E^1$ satisfying the hypothesis of Theorem \ref{qfree}. The above isomorphism transports the quasi-free automorphism $\lambda_u$ 
of $C^*(E)$ onto an automorphism $\lambda_U$ of $\OO_2$ corresponding to the unitary
$$ U= \xi_{aa}T_{11}T_{11}^* + \xi_{ab}T_{11}T_{121}^* + \xi_{ba}T_{121}T_{11}^* + \xi_{bb}T_{121}T_{121}^* + 
T_{122}T_{122}^* + T_2T_2^*. $$
Clearly, $\lambda_U$ is not a quasi-free automorphism of $\OO_2$, since $U$ does not belong to the linear span of $T_iT_j^*$, $i,j=1,2$. 
Theorem \ref{qfree} implies that  there is no unitary $w\in\OO_2$ satisfying $w\D_2w^*=\lambda_U(\D_2)$. 


\section{Certain MASAs in $\OO_n$ not inner conjugate to the diagonal $\D_n$}

In this section, we consider the Cuntz algebra $\OO_n$, with $2\leq n<\infty$. As usual, we view it as the graph $C^*$-algebra 
of the graph $E_n$ with one vertex and $n$ edges. Let $\lambda_u\in\en(\OO_n)$. Suppose 
$w_k$ is a sequence of unitaries in a commutative $C^*$-subalgebra $A$ of $\OO_n$. 
We ask under what circumstances the sequence $w_k$ satisfies the condition
of Lemma \ref{calgibbm} for $M=\OO_n$, $A$, $B=\D_n$, and $\tau$ the canonical trace on the UHF-subalgebra 
$\F_n$. Clearly, this is the case if and only if 
\begin{equation}\label{czerolimit}
|| \Phi_{\D_n}(S_\alpha S_\beta^* w_k S_\mu S_\nu^*) ||_2 \underset{k\to\infty}{\longrightarrow} 0, 
\end{equation}
for all paths $\alpha,\beta,\mu,\nu$. Let 
\begin{equation}\label{onfs}
w_k = \sum_{m\in\Zb}w_k^{(m)}
\end{equation}
be the standard Fourier series of $w_k$ (with respect to the decomposition of $\OO_n$ into spectral subspaces $\OO_n^{(m)}$ for the gauge action). Then 
(\ref{czerolimit}) is equivalent to the requirement that 
\begin{equation}\label{cfszerolimit}
|| \Phi_{\D_n}(S_\alpha S_\beta^* w_k^{(m)} S_\mu S_\nu^*) ||_2 \underset{k\to\infty}{\longrightarrow} 0, 
\end{equation}
for all paths $\alpha,\beta,\mu,\nu$, and all $m\in\Zb$. Of course, it suffices to consider the case $m=|\beta|+|\nu|-|\alpha|-|\mu|$. 
Clearly, for all $x\in\OO_n$ and all paths $\alpha$ we have 
\begin{equation}\label{shiftingnorm}
|| \Phi_{\D_n}(S_\alpha xS_\alpha^*)||_2 = n^{-|\alpha|/2}|| \Phi_{\D_n}(x)||_2. 
\end{equation}
Thus it suffices to consider condition (\ref{cfszerolimit}) in the following three cases:

\vspace{3mm}\noindent
(ZL1) $\nu=\emptyset$, $\beta\neq\emptyset$ and $m=|\beta| - |\alpha| - |\mu|$,

\vspace{3mm}\noindent
(ZL2)  $\alpha=\emptyset$, $\mu\neq\emptyset$ and   $m=|\beta|+|\nu|-|\mu|$, 

\vspace{3mm}\noindent
(ZL3)  $\alpha=\nu=\emptyset$ and   $m=|\beta|-|\mu|$. 

\begin{lemma}\label{zl3king}
If (\ref{cfszerolimit}) holds for all $\alpha$, $\beta$, $\mu$, $\nu$ as in (ZL3), it holds for all $\alpha$, $\beta$, $\mu$, $\nu$ as in (ZL1) and (ZL2).
\end{lemma}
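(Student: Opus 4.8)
The plan is to reduce both (ZL1) and (ZL2) to the case (ZL3) by ``moving'' the extra factor $S_\alpha$ (respectively $S_\nu^*$) from one end of the word to the other, at the cost of an explicit power of $n^{-1/2}$. The key is that conjugating by a path isometry interacts with $\Phi_{\D_n}$ in a controlled way, essentially the content of (\ref{shiftingnorm}).

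Concretely, for (ZL1) I would set $y=S_\beta^* w_k^{(m)}S_\mu$, so that the quantity to estimate is $\|\Phi_{\D_n}(S_\alpha y)\|_2$, and first establish the operator identity
\[
\Phi_{\D_n}(S_\alpha y)=S_\alpha\,\Phi_{\D_n}(yS_\alpha)\,S_\alpha^*,
\]
valid for every $y\in\OO_n$ and every path $\alpha$. Symmetrically, writing $X=S_\beta^* w_k^{(m)}S_\mu$, for (ZL2) I would prove
\[
\Phi_{\D_n}(XS_\nu^*)=S_\nu\,\Phi_{\D_n}(S_\nu^* X)\,S_\nu^*.
\]
Both are bimodule-type statements about $\Phi_{\D_n}$, proved in the same routine manner: by linearity and $\|\cdot\|$-to-$\|\cdot\|_2$ continuity it suffices to check them on monomials $y=S_\sigma S_\tau^*$, where one uses $\Phi_{\D_n}(S_\sigma S_\tau^*)=\delta_{\sigma,\tau}S_\sigma S_\sigma^*$ together with $S_\alpha^* S_\alpha=1$, and performs a short case analysis according to whether $\tau$ and $\alpha$ (respectively $\sigma$ and $\nu$) are comparable as paths. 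In each case the diagonal part surviving on the left is exactly the one surviving on the right after the prescribed conjugation; the only terms contributing are those with $\tau=\alpha\sigma$ (respectively $\sigma=\nu\tau$).

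Granting the identities, the conclusion is immediate. Since $S_\alpha\,\Phi_{\D_n}(yS_\alpha)\,S_\alpha^*$ is already diagonal, applying (\ref{shiftingnorm}) with $x=\Phi_{\D_n}(yS_\alpha)$ yields
\[
\|\Phi_{\D_n}(S_\alpha y)\|_2=n^{-|\alpha|/2}\,\|\Phi_{\D_n}(yS_\alpha)\|_2,
\]
and now $yS_\alpha=S_\beta^* w_k^{(m)}S_{\mu\alpha}$ is of the form (ZL3), so the right-hand side tends to $0$ by hypothesis. In the same way $\|\Phi_{\D_n}(XS_\nu^*)\|_2=n^{-|\nu|/2}\|\Phi_{\D_n}(S_\nu^* X)\|_2$, and $S_\nu^* X=S_{\beta\nu}^* w_k^{(m)}S_\mu$ is again of the form (ZL3), which settles (ZL2).

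I expect the only genuine work to be the bookkeeping in the two identities: one must check that, after inserting $S_\alpha^* S_\alpha=1$, the strictly-shorter cases (such as $\tau\prec\alpha$) produce honestly off-diagonal words whose diagonal parts vanish, so that no spurious terms appear on either side. A secondary, purely technical point is the passage from the dense $*$-subalgebra spanned by the $S_\sigma S_\tau^*$ to arbitrary $w_k^{(m)}\in\OO_n^{(m)}$, handled by continuity since $\Phi_{\D_n}$ is contractive and $\|\cdot\|_2\le\|\cdot\|$.
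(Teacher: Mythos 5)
Your proposal is correct and follows essentially the same route as the paper: both arguments move the outer $S_\alpha$ (resp.\ $S_\nu^*$) to the other side of the word using the $\D_n$-bimodule property of $\Phi_{\D_n}$ (the paper inserts $S_\alpha S_\alpha^*$ and commutes it past $\Phi_{\D_n}$, which is exactly your identity $\Phi_{\D_n}(S_\alpha y)=S_\alpha\Phi_{\D_n}(yS_\alpha)S_\alpha^*$), then invoke (\ref{shiftingnorm}) to pick up the factor $n^{-|\alpha|/2}$ and land in case (ZL3) with $S_\beta^* w_k^{(m)}S_{\mu\alpha}$, resp.\ $S_{\beta\nu}^* w_k^{(m)}S_\mu$. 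The only difference is presentational: you verify the bimodule identity by a monomial case analysis, where the paper gets it in one line from $\Phi_{\D_n}(pz)=\Phi_{\D_n}(zp)$ for projections $p\in\D_n$.
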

\begin{proof}
Consider condition (ZL1) first. By (ZL3), we have 
$$
|| \Phi_{\D_n}(S_\beta^* w_k^{(m)} S_\mu S_\alpha) ||_2 \underset{k\to\infty}{\longrightarrow} 0.
$$
Thus by identity (\ref{shiftingnorm}) we also have 
$$
|| \Phi_{\D_n}(S_\alpha S_\beta^* w_k^{(m)} S_\mu ) ||_2 = || \Phi_{\D_n}(S_\alpha S_\alpha^* S_\alpha S_\beta^* w_k^{(m)} S_\mu ) ||_2 = 
|| \Phi_{\D_n}(S_\alpha S_\beta^* w_k^{(m)} S_\mu S_\alpha S_\alpha^*) ||_2 
$$
$$ = n^{-|\alpha|/2} || \Phi_{\D_n}(S_\beta^* w_k^{(m)} S_\mu S_\alpha) ||_2 \underset{k\to\infty}{\longrightarrow} 0.
$$
Now, consider condition (ZL2). By (ZL3), we have 
$$
|| \Phi_{\D_n}(S_\nu^* S_\beta^* w_k^{(m)} S_\mu ) ||_2 \underset{k\to\infty}{\longrightarrow} 0.
$$
Thus by identity (\ref{shiftingnorm}) we also have 
$$
|| \Phi_{\D_n}(S_\beta^* w_k^{(m)} S_\mu S_\nu^* ) ||_2 = || \Phi_{\D_n}(S_\beta^* w_k^{(m)} S_\mu S_\nu^*S_\nu S_\nu^* ) ||_2 = 
|| \Phi_{\D_n}(S_\nu S_\nu^* S_\beta^* w_k^{(m)} S_\mu S_\nu^*) ||_2 
$$
$$
= n^{-|\nu|/2} || \Phi_{\D_n}(S_\nu^* S_\beta^* w_k^{(m)} S_\mu ) ||_2 \underset{k\to\infty}{\longrightarrow} 0.
$$
\end{proof}
Now, we describe a construction of a large family of MASAs of the Cuntz algebra $\OO_n$ which are contained in the core UHF-subalgebra 
$\F_n$ and are not inner conjugate to the diagonal MASA $\D_n$. MASAs obtained by applying to $\D_n$ quasi-free automorphisms not 
preserving $\D_n$ provide very special examples of this more general construction. 

We start with a sequence $\{r_k\}_{k=1}^\infty$ of positive integers, and denote $R_1:=0$ and $R_k:=\sum_{j=1}^{k-1}r_j$ for $k\geq 2$. 
For each $k$ pick a  $0<c_k<1$ so that 
$$
\prod_{k=1}^\infty c_k =0.
$$
For each $k$ let $d_k$ be a unitary in $\varphi^{R_k}(\D_n^{r_k})$ and $U_k$ a unitary in $\varphi^{R_k}(\F_n^{r_k})$ such that 
\begin{equation}\label{dkUk}
|| \Phi_{\D_n}(U_k d_k U_k^*)||_2 \leq c_k. 
\end{equation}
Such unitaries may be found through easy matrix considerations. 
Given this data, we define $\A$ to be the $C^*$-subalgebra of $\OO_n$ generated by the union of all algebras $U_k \varphi^{R_k}(\D_n^{r_k}) U_k^*$. 

\begin{theorem}\label{newmasas}
Every $C^*$-algebra $\A$, defined as above, is a MASA in $\OO_n$ that is not inner conjugate to $\D_n$.
\end{theorem}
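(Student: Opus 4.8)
The proof splits into two independent parts: that $\A$ is maximal abelian in $\OO_n$, and that no unitary of $\OO_n$ conjugates $\A$ into $\D_n$. Throughout I use the identification of the core UHF-algebra $\F_n$ with $\bigotimes_{j\ge1}M_n$ under which $\F_n^p\cong M_n^{\otimes p}$ sits on the first $p$ tensor legs, $\varphi$ is the right shift of legs, $\D_n$ becomes $\bigotimes_j D_n$, and $\Phi_{\D_n}=\bigotimes_j E$ with $E\colon M_n\to D_n$ the diagonal expectation. In this picture $\varphi^{R_k}(\F_n^{r_k})$ occupies exactly the legs $R_k+1,\dots,R_{k+1}$, so the blocks $\varphi^{R_k}(\D_n^{r_k})$, the unitaries $U_k\in\varphi^{R_k}(\F_n^{r_k})$, and the generators $U_kd_kU_k^*$ of $\A$ live on pairwise disjoint leg-intervals and mutually commute. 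Writing $V_N:=U_1\cdots U_N\in\F_n^{R_{N+1}}$ and using $\D_n^{R_{N+1}}=\bigvee_{k=1}^N\varphi^{R_k}(\D_n^{r_k})$, the localization of the $U_k$ gives $\A_N:=\bigvee_{k=1}^N U_k\varphi^{R_k}(\D_n^{r_k})U_k^*=V_N\,\D_n^{R_{N+1}}\,V_N^*$, and $\A=\overline{\bigcup_N\A_N}$.

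For the MASA property I would first show $\A$ is a MASA in $\F_n$. Each $\A_N$ is a unitary conjugate of the diagonal MASA $\D_n^{R_{N+1}}$ inside the full matrix algebra $\F_n^{R_{N+1}}$, hence a MASA there, so $\A_N'\cap\F_n^{R_{N+1}}=\A_N$. Letting $\E_N\colon\F_n\to\F_n^{R_{N+1}}$ be the canonical ($\F_n^{R_{N+1}}$-bimodular, trace-preserving) conditional expectation, any $x\in\A'\cap\F_n$ commutes with $\A_N$, so $\E_N(x)\in\A_N'\cap\F_n^{R_{N+1}}=\A_N\subseteq\A$; since $\E_N(x)\to x$ this yields $\A'\cap\F_n=\A$. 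It then remains to prove $\A'\cap\OO_n\subseteq\F_n$. Since every generator of $\A$ lies in $\F_n$, the algebra $\A$ is gauge-invariant, so if $x$ commutes with $\A$ then so does each spectral component $\Phi^m(x)$; thus it suffices to rule out a nonzero $z\in\OO_n^{(m)}\cap\A'$ with $m\neq0$. Writing $z=\sum_{|\sigma|=m}S_\sigma y_\sigma$ with $y_\sigma=S_\sigma^*z\in\F_n$, and using $S_\sigma^*\varphi^m(b)S_\tau=\delta_{\sigma\tau}b$, the commutation $za=az$ for $a$ in the twisted tail MASA $\A_\ge:=\overline{\bigvee_{R_k\ge m}U_k\varphi^{R_k}(\D_n^{r_k})U_k^*}\subseteq\varphi^m(\F_n)$ yields the intertwining relations $y_\sigma\,a=\varphi^{-m}(a)\,y_\sigma$ for all $a\in\A_\ge$ and every $\sigma$. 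In words, the fixed element $y_\sigma\in\F_n$ must intertwine the tail MASA $\A_\ge$ with its left shift by $m$ legs, and I expect this to be the main obstacle of the entire proof: a bounded element approximable on finitely many legs cannot implement such a genuine leg-shift on arbitrarily far blocks, so $y_\sigma=0$ and hence $z=0$. This is the exact analogue, transported through the localized conjugators $U_k$, of the Hopenwasser--Peters--Power mechanism \cite{HPP} by which $\D_n$ admits no normalizer of nonzero gauge degree, and I would establish it by the same path-separation argument carried out blockwise.

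For non-inner-conjugacy I would verify the hypothesis of Lemma \ref{calgibbm} with $M=\OO_n$, $A=\A$, $B=\D_n$, and $\tau$ the canonical trace on $\F_n$, using the unitaries $w_N:=\prod_{k=1}^N U_kd_kU_k^*\in\U(\A)$. The engine is that $\Phi_{\D_n}$ and the $2$-norm both factor over disjoint legs, so $\Phi_{\D_n}(w_N)=\prod_{k=1}^N\Phi_{\D_n}(U_kd_kU_k^*)$ and, by (\ref{dkUk}), $||\Phi_{\D_n}(w_N)||_2=\prod_{k=1}^N||\Phi_{\D_n}(U_kd_kU_k^*)||_2\le\prod_{k=1}^N c_k\to0$. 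To promote this to the full condition (\ref{czerolimit}) it suffices, by the reductions (ZL1)--(ZL3) together with Lemma \ref{zl3king}, to treat the case $\alpha=\nu=\emptyset$, i.e. to bound $||\Phi_{\D_n}(S_\beta^*w_NS_\mu)||_2$ with $|\beta|=|\mu|=:p$. Here I would pick $N_0$ with $R_{N_0}\ge p$ and split $w_N=w_{\mathrm{front}}\,w_{\mathrm{tail}}$ with $w_{\mathrm{front}}\in\F_n^{R_{N_0}}$ and $w_{\mathrm{tail}}\in\varphi^{R_{N_0}}(\F_n)$; using $S_e x=\varphi(x)S_e$ to carry $S_\mu$ through the tail and the $\varphi$-invariance of $\tau$ on $\OO_n$ (Remark \ref{traceshift}, valid since the adjacency matrix of $E_n$ is $(n)$), one finds $S_\beta^*w_NS_\mu=c\cdot a_{\mathrm{tail}}$ with $c\in\F_n^{R_{N_0}-p}$ fixed and $a_{\mathrm{tail}}$ a conjugated tail-diagonal unitary supported on strictly higher legs. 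Factoring $\Phi_{\D_n}$ across this cut and invoking $\Phi_{\D_n}\circ\varphi=\varphi\circ\Phi_{\D_n}$ then gives $||\Phi_{\D_n}(S_\beta^*w_NS_\mu)||_2\le||\Phi_{\D_n}(c)||_2\,\prod_{k=N_0}^N c_k\le\prod_{k=N_0}^N c_k\to0$, because $\prod_k c_k=0$. With (\ref{czerolimit}) established, Lemma \ref{calgibbm} shows there is no unitary $v$ with $v\A v^*\subseteq\D_n$, and since $\A$ is a MASA this precludes inner conjugacy to $\D_n$.

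In summary, the genuinely delicate point is the second half of the MASA statement, that $\A$ has no relative commutant outside $\F_n$; everything else is either a routine finite-dimensional MASA computation or the clean multiplicative estimate driven by $\prod_k c_k=0$. The hypotheses on $d_k,U_k$ in (\ref{dkUk}) are used precisely so that each tensor leg contributes a factor at most $c_k$, and the telescoping of these factors is what converts the intertwining-by-bimodules obstruction of Lemma \ref{calgibbm} into the asserted non-conjugacy.
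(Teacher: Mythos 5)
Your non-inner-conjugacy argument is correct and is essentially the paper's: reduce to case (ZL3) via Lemma \ref{zl3king}, observe that $w_N\in\F_n$ so only the $m=0$ spectral component matters, split $w_N$ into a front block absorbing $S_\beta^*(\,\cdot\,)S_\mu$ and a tail lying in the range of $\varphi^{|\mu|}$, and let Lemma \ref{productcondexp} together with $\prod_k c_k=0$ finish. Likewise the identification $\A_N=V_N\D_n^{R_{N+1}}V_N^*$ and the finite-level expectation argument giving $\A'\cap\F_n=\A$ are fine (the paper simply cites \cite{DP} for this). The genuine gap is exactly where you flag it yourself: the inclusion $\A'\cap\OO_n\subseteq\F_n$. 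You reduce it to the claim that an element $y_\sigma\in\F_n$ satisfying $b\,y_\sigma=y_\sigma\,\varphi^{m}(b)$ for all $b$ with $\varphi^m(b)$ in the tail of $\A$ must vanish, and then write that you ``expect'' this and ``would establish it by the same path-separation argument carried out blockwise.'' That is the crux of the MASA assertion and it is not proved; nor is it a routine transplant of the normalizer analysis of \cite{HPP}, since the conjugating unitaries $U_k$ are arbitrary unitaries of $\varphi^{R_k}(\F_n^{r_k})$ and destroy the path-space combinatorics on which that analysis rests. As written, the first half of the theorem is left as a conjecture.

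The paper closes this step by a short argument that bypasses intertwining relations entirely and that you could adopt verbatim. For $x\in\A'\cap\OO_n$, each spectral component $x^{(m)}$ again commutes with $\A$, so $x^{(m)*}x^{(m)}$ and $x^{(m)}x^{(m)*}$ both lie in $\A'\cap\F_n=\A$, which is abelian; comparing $\|q\,x^{(m)}x^{(m)*}\|$ with $\|q\,x^{(m)*}x^{(m)}\|$ over projections $q\in\A$ shows that these two positive elements coincide, i.e.\ $x^{(m)}$ is \emph{normal}. Writing $x^{(m)}=av$ with $v=S_1^m$ and $a=x^{(m)}v^*\in\F_n$ (for $m>0$), normality and the identity $\tau(v^*a^*av)=n^m\tau(a^*a)$ yield $\tau(a^*a)=n^m\tau(a^*a)$, hence $a=0$; so $x^{(m)}=0$ for all $m\neq0$ and $x\in\A'\cap\F_n=\A$. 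If you prefer to keep your intertwining route you must actually prove the vanishing of $y_\sigma$ --- for instance by showing that $y_\sigma^* y_\sigma$ lies in the relative commutant of the tail MASA and extracting a trace contradiction --- but some such argument is indispensable before the theorem can be considered proved.
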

\begin{proof}
Let $\A$ be as above. Then clearly $\A$ is a MASA in the core UHF-subalgebra $\F_n$ of $\OO_n$, for example see \cite{DP}. 
We will show that  $\A$ is a MASA in the entire $\OO_n$ as well. 

Indeed, let $x$ be in $\A'\cap\OO_n$, and let $x = \sum_{m\in\Zb}x^{(m)}$ be 
its standard Fourier series. Then for each $m$ we have $x^{(m)}\in \A'\cap\OO_n$. Consider a fixed $m>0$. Both $x^{(m)*}x^{(m)}$ 
and $x^{(m)}x^{(m)*}$ are in $\A'\cap \F_n = \A$. Since both these elements are positive and for every projection $q\in\A$ we have 
$||qx^{(m)}x^{(m)*}|| = ||qx^{(m)}x^{(m)*}q|| = ||x^{(m)*}qx^{(m)}|| = ||qx^{(m)*}x^{(m)}||$, 
it easily follows that $x^{(m)*}x^{(m)} = x^{(m)}x^{(m)*}$. That is, the element 
$x^{(m)}$ of $\OO_n$ is normal. Now, denote $v=S_1^m$ and $a=x^{(m)}(S_1^m)^*$. Then $a\in\F_n$ and we have $x^{(m)}=av$.
Since $av$ is normal, we have 
$$
\tau(a^*a) = \tau(vv^*a^*a) = \tau(avv^*a^*) = \tau(v^*a^*av) = n^m \tau(a^*a). 
$$
Thus $\tau(a^*a)=0$ and hence $a=0$. Consequently, $x^{(m)}=0$ for all $m>0$. A similar argument shows that $x^{(m)} = 0$ 
for all $m<0$. Thus $x=x^{(0)}$ belongs to $\A'\cap\F_n = \A$, and $\A$ is a MASA in $\OO_n$ as claimed.

To show that $\A$ is not inner conjugate in $\OO_n$ to $\D_n$, we verify that condition 
(ZL3) holds for 
$$
w_k := \prod_{j=1}^k U_j d_j U_j^*. 
$$ 
Since each $w_k$ is in $\F_n$, it suffices to check it with $m=0$. So fix $\beta,\mu$ with $|\beta|=|\mu|$. Take $t$ so large that 
$t\geq|\beta|$ and consider $k> t$. Since $\prod_{j=t+1}^k U_j d_j U_j^*$ is in the range of injective endomorphism $\varphi^{|\mu|}$, we have
$$ \begin{aligned}
|| \Phi_{\D_n}(S_\beta^* w_k S_\mu ) ||_2 & = || \Phi_{\D_n}(S_\beta^* (\prod_{j=1}^{t} U_j d_j U_j^*) 
 \varphi^{|\mu|}(\varphi^{-|\mu|}(\prod_{j=t+1}^k U_j d_j U_j^*))S_\mu ) ||_2 \\
 & = || \Phi_{\D_n}(S_\beta^* (\prod_{j=1}^{t} U_j d_j U_j^*)S_{\mu} \varphi^{-|\mu|}(\prod_{j=t+1}^k U_j d_j U_j^*)) ||_2
\end{aligned} $$
Thus we have by Lemma \ref{productcondexp}, below, that 
$$ \begin{aligned}
|| \Phi_{\D_n}(S_\beta^* w_k S_\mu ) ||_2 & = || \Phi_{\D_n}(S_\beta^* (\prod_{j=1}^{t} U_j d_j U_j^*)S_{\mu}) 
\Phi_{\D_n}(\varphi^{-|\mu|}(\prod_{j=t+1}^k U_j d_j U_j^*)) ||_2 \\ 
 & \leq || \Phi_{\D_n}(S_\beta^* (\prod_{j=1}^{t} U_j d_j U_j^*)S_{\mu}) || \cdot 
|| \Phi_{\D_n}(\varphi^{-|\mu|}(\prod_{j=t+1}^k U_j d_j U_j^*)) ||_2 \\ 
 & = || \Phi_{\D_n}(S_\beta^* (\prod_{j=1}^{t} U_j d_j U_j^*)S_{\mu}) || \cdot || \prod_{j=t+1}^k \Phi_{\D_n}(U_j d_j U_j^*) ||_2 \\
 & = || \Phi_{\D_n}(S_\beta^* (\prod_{j=1}^{t} U_j d_j U_j^*)S_{\mu}) || \cdot \prod_{j=t+1}^k || \Phi_{\D_n}(U_j d_j U_j^*) ||_2  \\
 & \leq || \Phi_{\D_n}(S_\beta^* (\prod_{j=1}^{t} U_j d_j U_j^*)S_{\mu}) || \cdot \prod_{j=t+1}^k  c_k \underset{k\to\infty}{\longrightarrow} 0.
\end{aligned} $$
\end{proof}

We remark that it is not immediately clear which of the MASAs considered in Proposition \ref{newmasas} are outer conjugate in $\OO_n$ to $\D_n$.


\section{Technical lemmas}\label{sectiontechnicallemmas}

In this section, we collect a few technical facts used in the proofs above. 


\subsection{The conditional expectations}

\begin{lemma}\label{productcondexp}
Let $A$ and $B$ be unital $C^*$-subalgebras of a finite-dimensional $C^*$-algebra, such that $ab=ba$ for all $a\in A$, $b\in B$. 
Let $D_A$ and $D_B$ be MASAs of $A$ and $B$, respectively, so that $D:=D_A\vee D_B$ is a MASA of $A\vee B$. Let $\tau$ be a faithful  
tracial state on $A\vee B$, and let $E_D$, $E_{D_A}$ and $E_{D_B}$ be $\tau$-preserving conditional expectations from $A\vee B$ onto 
$D$, $D_A$ and $D_B$, respectively. Then we have 
$$ E_D(ab) = E_{D_A}(a)E_{D_B}(b) $$ 
for all $a\in A$, $b\in B$. 
\end{lemma}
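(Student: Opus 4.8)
The plan is to peel the identity down to a single trace-orthogonality statement, and then crack that statement using the commutation of $A$ and $B$ together with the fact that conditional expectations onto MASAs absorb centres. Throughout I would use the standard facts that, in a finite-dimensional $C^*$-algebra with a faithful tracial state, the $\tau$-preserving conditional expectation onto a subalgebra $C$ exists, is unique, is a $C$-bimodule map, is trace-preserving, satisfies the tower property $E_{C_1}\circ E_{C_2}=E_{C_1}$ for $C_1\subseteq C_2$, and is characterized by: $E_C(x)$ is the unique $c\in C$ with $\tau(cz)=\tau(xz)$ for all $z\in C$. I write $E_A,E_B$ for the $\tau$-expectations onto $A,B$.

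First I would unwind the target. Since $D_A,D_B$ are abelian and commute (as $A,B$ do), $D=D_A\vee D_B$ is the span of products $d_Ad_B$, and $E_{D_A}(a)E_{D_B}(b)\in D$. By the characterization of $E_D$, proving $E_D(ab)=E_{D_A}(a)E_{D_B}(b)$ amounts to checking $\tau(E_{D_A}(a)E_{D_B}(b)\,d_Ad_B)=\tau(ab\,d_Ad_B)$ for all $d_A,d_B$. Rearranging the commuting factors and absorbing $d_A,d_B$ into $a,b$ via the bimodule property of $E_{D_A},E_{D_B}$, this collapses to the single identity
\[
\tau(\alpha\beta)=\tau(E_{D_A}(\alpha)E_{D_B}(\beta)),\qquad \alpha\in A,\ \beta\in B,
\]
which I call $(\ast)$. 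Writing $\alpha=E_{D_A}(\alpha)+\alpha_0$ and $\beta=E_{D_B}(\beta)+\beta_0$ with $E_{D_A}(\alpha_0)=E_{D_B}(\beta_0)=0$ and expanding, the three cross terms must vanish, so $(\ast)$ reduces to the orthogonality statement
\[
E_{D_A}(a')=0\ \Longrightarrow\ \tau(a'b)=0\ \text{ for all } b\in B,
\]
which I call $(\mathrm O)$, together with its $A\leftrightarrow B$ mirror $(\mathrm O')$.

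The crux is $(\mathrm O)$. My key observation would be that the expectation of $A$ into $B$ is forced into the centre: for $a\in A$, $b'\in B$ the relation $ab'=b'a$ and the bimodule property give $E_B(a)b'=E_B(ab')=E_B(b'a)=b'E_B(a)$, whence $E_B(A)\subseteq Z(B)$. As the centre sits inside every MASA, $Z(B)\subseteq D_B$, and the tower property then gives $E_B(a)=E_{D_B}(E_B(a))=E_{D_B}(a)$; symmetrically $E_A(b)=E_{D_A}(b)$ for $b\in B$. To prove $(\mathrm O)$ I would first note $\tau(a'b)=\tau(E_B(a')b)$, so it suffices to show $p:=E_B(a')=0$. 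I would compute $\tau(p^*p)=\tau(p^*a')$ (bimodule plus trace-preservation), push $p^*\in B$ through $E_A$ to get $\tau(p^*a')=\tau(E_A(p^*)a')=\tau(E_{D_A}(p)^*a')$, and finally use trace-preservation and the $D_A$-bimodule property of $E_{D_A}$ to land on $\tau(E_{D_A}(p)^*E_{D_A}(a'))=0$ since $E_{D_A}(a')=0$. Faithfulness of $\tau$ yields $p=0$, hence $(\mathrm O)$; exchanging $A$ and $B$ gives $(\mathrm O')$, and then $(\ast)$ and the theorem follow.

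The main obstacle is precisely $(\mathrm O)$: that the part of $A$ orthogonal to its own diagonal $D_A$ is trace-orthogonal to \emph{all} of $B$, not merely to $D_B$. A direct attack by inserting $E_{D_A}$ or $E_{D_B}$ runs in circles, as each such reformulation turns out to be equivalent to $(\mathrm O)$ again. What breaks the loop is the centrality $E_B(A)\subseteq Z(B)\subseteq D_B$ coming from the commutation of $A$ and $B$, which converts the stubborn cross term into one governed by $E_{D_A}$ alone. I would expect the MASA hypotheses to enter only through the inclusions $Z(A)\subseteq D_A$ and $Z(B)\subseteq D_B$; the assumption that $D$ itself is a MASA of $A\vee B$ appears not to be needed for this particular identity.
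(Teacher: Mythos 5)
Your proof is correct, but it takes a genuinely different route from the paper's. The paper argues structurally: it first treats the case where $A$ is a full matrix algebra, in which case $A\vee B\cong A\otimes B$, the trace factorizes as $\tau(ab)=\tau(a)\tau(b)$, and the identity is immediate; the general case then follows by decomposing $A\vee B$ over the minimal central projections of $A$ into blocks of the form $Ap_i\otimes Bp_i$ and summing the block-wise expectations. Your argument instead stays entirely at the level of abstract $\tau$-preserving conditional expectations: the reduction to the orthogonality statement that $\ker E_{D_A}\cap A$ is trace-orthogonal to all of $B$, and the key mechanism $E_B(A)\subseteq Z(B)\subseteq D_B$ (forced by commutation plus the MASA hypothesis), which lets you identify $E_B|_A=E_{D_B}|_A$ and close the loop via $\tau(p^*p)=\tau(E_{D_A}(p)^*E_{D_A}(a'))=0$. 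Each step you use (bimodule property, tower property, trace-duality characterization of $E_C$) is standard and correctly applied, so the argument is complete. What the paper's proof buys is brevity, by exploiting finite-dimensional structure theory directly; what yours buys is generality and transparency of hypotheses: it works wherever the relevant $\tau$-preserving expectations exist (e.g.\ finite von Neumann algebras with commuting subalgebras), it isolates that only the inclusions $Z(A)\subseteq D_A$ and $Z(B)\subseteq D_B$ are needed from the MASA assumptions, and your observation that the hypothesis ``$D$ is a MASA of $A\vee B$'' is not actually used is accurate. For the application in this paper (the finite-dimensional algebras $\varphi^j(\mathfrak B)$ inside $\F_E$), both proofs serve equally well.
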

\begin{proof}
If $A$ is a full matrix algebra (i.e., the center of $A$ is trivial) then $A\vee B\cong A\otimes B$ and $\tau(ab)=\tau(a)\tau(b)$ for all $a\in A$, $b\in B$. 
Thus, in this case, the claim obviously holds. 

In the general case, let $\{p_1,\ldots,p_n\}$ be the minimal central projections in $A$. Then 
$$ A\vee B = \bigoplus_{i=1}^n(A\vee B){p_i} \cong \bigoplus_{i=1}^n A{p_i}\otimes B{p_i}. $$
The $\tau$-preserving conditional expectation $E_i$ from $(A\vee B){p_i}$ onto $(D_A\vee D_B){p_i}$ satisfies 
$$ E_i(ab)=E_{D_A p_i}(a)E_{D_B p_i}(b) $$ 
for all $a\in Ap_i$ and $b\in Bp_i$, by the preceding argument. Since 
$$ E_D(x) = \sum_{i=1}^n E_i(xp_i), $$
the claim follows. 
\end{proof}

\begin{corollary}\label{ce}
For all $x_1,x_2,\ldots,x_k\in\Bf$ we have
$$
\Phi_{\D}(x_1\varphi(x_2)\cdots\varphi^{k-1}(x_k)) = \Phi_{\D}(x_1)\varphi(\Phi_{\D}(x_2))\cdots \varphi^{k-1}(\Phi_{\D}(x_k)).
$$
\end{corollary}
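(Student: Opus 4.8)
The plan is to prove the identity by induction on $k$, peeling off the leftmost factor $x_1$ at each stage via a single application of Lemma \ref{productcondexp}. The case $k=1$ is trivial, so assume the formula holds for products of $k-1$ factors.

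For the inductive step I would set $A=\Bf$ and $B=\varphi(\Bf_E^{k-1})$, both regarded as unital $C^*$-subalgebras of the finite-dimensional algebra $\Bf_E^k=\Bf\vee\varphi(\Bf_E^{k-1})$. The first thing to verify is that $A$ and $B$ commute: since $\varphi^j(\Bf)=(\F_E^j)'\cap\F_E^{j+1}$ while $\Bf\subseteq\F_E^1\subseteq\F_E^j$ for every $j\geq 1$, each $\varphi^j(\Bf)$ with $j\geq 1$ commutes with $\Bf$, and hence so does $B=\bigvee_{j=1}^{k-1}\varphi^j(\Bf)$. I would then take $D_A=\D_E^1$ (a MASA of $\Bf$, being the diagonal of $\bigoplus_{v,w}M_{A(v,w)}(\Cb)$) and $D_B=\varphi(\D_E^{k-1})$ (a MASA of $\varphi(\Bf_E^{k-1})$, since $\varphi$ is an injective $*$-homomorphism on $(\D_E^0)'\cap C^*(E)\supseteq\Bf_E^{k-1}$), so that $D_A\vee D_B=\D_E^k$ is a MASA of $\Bf_E^k$ by the explicit tensor-product decomposition of $\Bf_E^k$ recorded in Section \ref{sectionpreliminaries}. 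Because $\tau\circ\Phi_\D=\tau$ and trace-preserving conditional expectations onto a subalgebra of a finite-dimensional algebra with faithful trace are unique, the restrictions of $\Phi_\D$ to $\Bf_E^k$, $\Bf$ and $\varphi(\Bf_E^{k-1})$ are precisely the $\tau$-preserving conditional expectations onto $\D_E^k$, $\D_E^1$ and $\varphi(\D_E^{k-1})$, respectively.

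With these identifications, Lemma \ref{productcondexp} applied to $a=x_1$ and $b=\varphi(x_2)\cdots\varphi^{k-1}(x_k)\in\varphi(\Bf_E^{k-1})$ gives
$$
\Phi_\D\big(x_1\varphi(x_2)\cdots\varphi^{k-1}(x_k)\big)=\Phi_\D(x_1)\,\Phi_\D\big(\varphi(x_2)\cdots\varphi^{k-1}(x_k)\big).
$$
To rewrite the second factor I would use that $\varphi$ is an injective $*$-homomorphism on $(\D_E^0)'\cap C^*(E)$, which contains $\Bf_E^{k-1}$, to pull out a shift, $\varphi(x_2)\cdots\varphi^{k-1}(x_k)=\varphi\big(x_2\varphi(x_3)\cdots\varphi^{k-2}(x_k)\big)$, and then invoke $\Phi_\D\circ\varphi=\varphi\circ\Phi_\D$ to move $\Phi_\D$ inside $\varphi$. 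Applying the inductive hypothesis to the $k-1$ elements $x_2,\ldots,x_k$ and distributing $\varphi$ over the resulting product (legitimate since $\varphi$ is multiplicative on $\D_E\supseteq\D_E^{k-1}$) yields $\varphi(\Phi_\D(x_2))\varphi^2(\Phi_\D(x_3))\cdots\varphi^{k-1}(\Phi_\D(x_k))$, and combining this with the displayed identity completes the induction.

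The main obstacle is not any single computation but the careful verification that every hypothesis of Lemma \ref{productcondexp} is genuinely met here --- chiefly that $\D_E^k$ really is a MASA of $\Bf_E^k$, and that the one globally defined expectation $\Phi_\D$ simultaneously restricts to all three $\tau$-preserving conditional expectations required. A secondary point to watch is the intertwining relation: the preliminaries assert $\Phi_\D\circ\varphi=\varphi\circ\Phi_\D$ on $\D_E$, whereas here it is needed on the larger set $\Bf_E^{k-1}\subseteq\F_E$. This extension is elementary, since for $|\mu|=|\nu|$ one computes $\Phi_\D(\varphi(S_\mu S_\nu^*))=\delta_{\mu,\nu}\varphi(S_\mu S_\mu^*)=\varphi(\Phi_\D(S_\mu S_\nu^*))$, so the relation in fact holds on all of $\F_E$.
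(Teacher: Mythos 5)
Your proof is correct and follows essentially the same route as the paper: the paper also deduces the identity by applying Lemma \ref{productcondexp} to the mutually commuting algebras $\Bf,\varphi(\Bf),\ldots,\varphi^{k-1}(\Bf)$ and then commuting $\Phi_{\D}$ with $\varphi$, merely leaving the iteration of the two-algebra lemma and the verification of its hypotheses implicit where you spell them out as an induction. Your extra observation that the relation $\Phi_{\D}\circ\varphi=\varphi\circ\Phi_{\D}$ needs to hold on $\F_E$ (not just on $\D_E$ as literally stated in the preliminaries), together with its one-line verification, is a worthwhile point of care but does not change the argument.
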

\begin{proof}
Since $\Bf$, $\varphi(\Bf)$, \ldots, $\varphi^{k-1}(\Bf)$ are mutually commuting 
unital finite-dimensional $C^*$-algebras, by Lemma 
\ref{productcondexp} we have 
$$
\Phi_{\D}(x_1\varphi(x_2)\cdots\varphi^{k-1}(x_k)) = \Phi_{\D}(x_1)\Phi_{\D}(\varphi(x_2))\cdots \Phi_{\D}(\varphi^{k-1}(x_k)).
$$
The claims follows since the conditional expectation $\Phi_{\D}$ commutes with the shift $\varphi$.
\end{proof}


\subsection{The Perron-Frobenius theory}

Let $A$ be an $n\times n$ matrix with non-negative integer entries. We assume that $A$ is {\em irreducible} in the sense that 
for each pair of indices $(i,j)$ there exists a positive integer $k$ such that $A^k(i,j)>0$. Let $\beta$ be the Perron-Frobenius eigenvalue 
and let $(x(1),x(2),\ldots,x(n))$ be the corresponding Perron-Frobenius eigenvector. That is, $\beta>0$, $x(i)>0$ for all indices $i=1,\ldots,n$, and 
$$
\sum_j A(i,j) x(j) = \beta x(i). 
$$
In this subsection, for a (not necessary square) matrix $B$ we write $B\geq 0$ if $B(i,j)\geq 0$ for all $(i,j)$. Likewise, we write 
$B> 0$ if $B(i,j)> 0$ for all $(i,j)$. For a column vector $y\geq 0$, we set 
$$
\lambda(y,A) = \max\{ \lambda\geq 0 \mid Ay\geq\lambda y \}.
$$
The following lemma is part of the classical Perron-Frobenius theory, hence its proof is omitted. 

\begin{lemma}\label{pf1}
For an irreducible matrix $A$, as above, we have 
$$
\beta = \max\{ \lambda(y,A) \mid y\geq 0, ||y||=1  \}.
$$
\end{lemma}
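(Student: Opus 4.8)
The plan is to establish the two inequalities
$$\max\{\lambda(y,A) : y\geq 0,\ ||y||=1\} \geq \beta \quad\text{and}\quad \lambda(y,A)\leq\beta$$
(the latter for every admissible $y$) separately, and then combine them. Before doing so I would record two elementary facts. First, $\lambda(\,\cdot\,,A)$ is scale invariant, $\lambda(cy,A)=\lambda(y,A)$ for every $c>0$, since $A(cy)\geq\lambda(cy)$ if and only if $Ay\geq\lambda y$; thus normalizing a vector does not affect its value. Second, for a fixed $y\geq 0$ with $y\neq 0$ the set $\{\lambda\geq 0 : Ay\geq\lambda y\}$ is closed, contains $0$, and is bounded above — choosing an index $i$ with $y(i)>0$ forces $\lambda\leq (Ay)(i)/y(i)$ — so the defining maximum is genuinely attained and $\lambda(y,A)$ is well defined.

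For the lower bound, let $x>0$ denote the Perron–Frobenius eigenvector, normalized so that $||x||=1$. Since $Ax=\beta x$ we have in particular $Ax\geq\beta x$, whence $\lambda(x,A)\geq\beta$; as $x$ is admissible, the maximum over all such $y$ is at least $\beta$.

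For the upper bound I would invoke the complementary half of the classical theory. Because $A$ is irreducible so is its transpose $A^\top$, and $A^\top$ has the same Perron–Frobenius eigenvalue $\beta$; let $z>0$ be a corresponding left eigenvector, so that $\sum_i z(i)A(i,j)=\beta z(j)$ for every $j$. Now fix any $y\geq 0$ with $||y||=1$ and set $\lambda=\lambda(y,A)$, so that $(Ay)(i)\geq\lambda y(i)$ for all $i$. Multiplying by $z(i)\geq 0$ and summing over $i$ gives
$$\lambda\sum_j z(j)y(j) \leq \sum_i z(i)(Ay)(i) = \sum_j\Big(\sum_i z(i)A(i,j)\Big)y(j) = \beta\sum_j z(j)y(j).$$
Since $z>0$ and $y\geq 0$ with $y\neq 0$, the common factor $\sum_j z(j)y(j)$ is strictly positive, and dividing yields $\lambda\leq\beta$. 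Together with the lower bound this proves the identity, the maximum being attained precisely at (a normalization of) the Perron–Frobenius eigenvector.

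The only input beyond one-line estimates is the existence of a strictly positive left eigenvector $z$ for the eigenvalue $\beta$, which is exactly the content of the Perron–Frobenius theorem applied to the irreducible matrix $A^\top$; since that theorem may be taken as given, I anticipate no genuine obstacle.
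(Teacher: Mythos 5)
Your argument is correct. The paper explicitly omits its own proof of this lemma, declaring it ``part of the classical Perron--Frobenius theory,'' so there is no argument in the text to compare against; what you give is the standard Collatz--Wielandt duality argument, obtaining the lower bound from the right Perron eigenvector and the upper bound by pairing the inequality $Ay\geq\lambda y$ against a strictly positive left eigenvector $z$ of ${}^tA$ for the same eigenvalue $\beta$ and using that $\sum_j z(j)y(j)>0$. The only external input, the existence of such a $z$, is exactly the Perron--Frobenius theorem for the irreducible matrix ${}^tA$, which is legitimately taken as given here (the paper itself invokes the left eigenvector of ${}^tA$ in its Lemma 6.3), so the proof is complete.
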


\begin{lemma}\label{newlemma}
Let $\beta'>0$ be the Perron-Frobenius eigenvalue of the 
transpose matrix ${^t}A$. Let $\{y(v)\}_v$ be the 
Perron-Frobenius eigenvector of ${^t}A$. That is, 
$$
\sum_v
A(v,w)y(v)=\beta' y(w).
$$
Set $m=\min_v y(v)$ and $M=\max_v y(v)$. 
For any $f\in {\mathcal D}_E$, we have 
$$
\tau(\varphi(f))\leq \dfrac{\beta' M}{\beta m}\tau(f).
$$
Hence we have 
$$
||\varphi^p(f)||_2^2\leq  \Bigl(\dfrac{\beta'M}
{\beta m}\Bigr)^p||f||_2^2.
$$
\end{lemma}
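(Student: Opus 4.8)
The plan is to reduce the trace inequality to a single-path estimate and then deduce the $2$-norm bound by iteration. Since $\varphi$ and $\tau$ are norm-continuous and the inequality is applied in the sequel (and, as a comparison of real numbers, naturally read) only for positive $f$, it suffices to prove $\tau(\varphi(f))\le \frac{\beta' M}{\beta m}\tau(f)$ for positive $f$ in the dense $*$-subalgebra $\bigcup_k\D_E^k$. Every such positive element lies in some $\D_E^k$, and because the range projections $\{P_\mu : \mu\in E^k\}$ are mutually orthogonal it can be written as $f=\sum_{\mu\in E^k}a_\mu P_\mu$ with all $a_\mu\ge 0$. By linearity it is therefore enough to establish $\tau(\varphi(P_\mu))\le \frac{\beta' M}{\beta m}\,\tau(P_\mu)$ for each path $\mu$: the non-negativity of the coefficients $a_\mu$ is exactly what lets these per-path inequalities be summed.

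For the per-path estimate I would first compute $\varphi(P_\mu)$ explicitly. Using $S_eS_\mu=S_{e\mu}$ when $r(e)=s(\mu)$ (and $S_eS_\mu=0$ otherwise), one gets $\varphi(P_\mu)=\sum_{e\in E^1,\,r(e)=s(\mu)}P_{e\mu}$, a sum over the edges that can be prepended to $\mu$. Each $e\mu$ is a path of length $|\mu|+1$ with $r(e\mu)=r(\mu)$, so the trace formula \eqref{trace} gives $\tau(P_{e\mu})=x(r(\mu))/(X\beta^{|\mu|+1})$, independent of $e$. Writing $w=s(\mu)$ and noting that the number of edges $e$ with $r(e)=w$ equals the column sum $\sum_v A(v,w)$, I obtain $\tau(\varphi(P_\mu))=\frac{1}{\beta}\bigl(\sum_v A(v,w)\bigr)\tau(P_\mu)$.

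The heart of the argument is then to bound the column sum $\sum_v A(v,w)$, and this is where the transpose eigenvector enters. From the defining relation $\sum_v A(v,w)y(v)=\beta' y(w)$ together with $m\le y(v)\le M$ one reads off $m\sum_v A(v,w)\le \sum_v A(v,w)y(v)=\beta' y(w)\le \beta' M$, so that $\sum_v A(v,w)\le \beta' M/m$. Substituting this into the formula for $\tau(\varphi(P_\mu))$ yields precisely $\tau(\varphi(P_\mu))\le \frac{\beta' M}{\beta m}\tau(P_\mu)$, completing the first inequality.

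Finally, for the $2$-norm statement I would use that $\varphi$ restricts to a $*$-homomorphism on $\D_E\subseteq (\D_E^0)'\cap C^*(E)$, so that $||\varphi^p(f)||_2^2=\tau(\varphi^p(f)^*\varphi^p(f))=\tau(\varphi^p(f^*f))$. Applying the first inequality $p$ times to the positive elements $\varphi^j(f^*f)$ for $j=p-1,\dots,0$ gives $\tau(\varphi^p(f^*f))\le (\beta' M/\beta m)^p\,\tau(f^*f)=(\beta' M/\beta m)^p\,||f||_2^2$, as desired. I expect the only genuine content to be the column-sum bound of the third step; the remaining steps are the explicit shift computation and a routine positivity/continuity reduction, with the mild point that one must invoke orthogonality of the $P_\mu$ and non-negativity of the coefficients to pass from the per-path inequality to all of $\D_E$.
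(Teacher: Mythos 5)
Your proposal is correct and follows essentially the same route as the paper: reduce to $f=P_\mu$, compute $\varphi(P_\mu)=\sum_{e,\,r(e)=s(\mu)}P_{e\mu}$, and bound the column sum $\sum_v A(v,s(\mu))$ by $\beta' M/m$ via the transpose Perron--Frobenius eigenvector, exactly as in the paper's chain of inequalities. Your extra care about positivity in the reduction to single paths and the explicit iteration $\|\varphi^p(f)\|_2^2=\tau(\varphi^p(f^*f))$ for the $2$-norm bound are points the paper leaves implicit, but they do not constitute a different argument.
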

\begin{proof}
We may assume that $f=S_\mu S_\mu^*$. 
We see that 
$$ \begin{aligned}
\tau(\varphi(S_\mu S_\mu^*))&=
\sum_{e,r(e)=s(\mu)}\tau(S_{e\mu}S_{e\mu}^*)=
\sum_{v}A(v,s(\mu))
\dfrac{x(r(\mu))}{X\beta^{|\mu|+1}}\\
&\leq \sum_{v}A(v,s(\mu))\dfrac{y(v)}{m}
\dfrac{x(r(\mu))}{X\beta^{|\mu|+1}}=
\beta'\dfrac{y(s(\mu))}{m}
\dfrac{x(r(\mu))}{X\beta^{|\mu|+1}}\\
&\leq \beta' \dfrac{M}{m}
\dfrac{x(r(\mu))}{X\beta^{|\mu|+1}}=
\dfrac{\beta'M}{\beta m}\tau(S_\mu S_\mu^*).
\end{aligned} $$

\end{proof}

\begin{lemma}\label{pf2}
For an irreducible matrix $A$, as above, we set $X=\sum_i x(i)$, $\alpha= \min_i x(i)$, and $\alpha'=\max_i x(i)$. Then 
for every positive integer $k$ we have 
$$
0<\frac{X}{\alpha'} \leq \frac{\sum_{i,j}A^k(i,j)}{\beta^k} \leq \frac{X}{\alpha}. 
$$
\end{lemma}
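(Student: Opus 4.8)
The plan is to estimate the two-sided bound on $\sum_{i,j}A^k(i,j)/\beta^k$ by relating the column sums of $A^k$ to the Perron--Frobenius eigenvector $x$. The key observation is that applying $A^k$ to the eigenvector $x$ reproduces it scaled by $\beta^k$, i.e. $\sum_j A^k(i,j)x(j) = \beta^k x(i)$ for every $i$. Summing over $i$ gives the clean identity
\begin{equation*}
\sum_{i,j} A^k(i,j)\, x(j) = \beta^k \sum_i x(i) = \beta^k X.
\end{equation*}
This single identity is the engine of the whole argument, so I would establish it first by iterating the eigenvector equation $k$ times.

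Next I would sandwich the unweighted sum $\sum_{i,j}A^k(i,j)$ between weighted sums using $\alpha = \min_i x(i)$ and $\alpha' = \max_i x(i)$. Since all entries $A^k(i,j)\geq 0$ and $\alpha \leq x(j) \leq \alpha'$ for every $j$, we have the pointwise bounds $\alpha\, A^k(i,j) \leq A^k(i,j)\,x(j) \leq \alpha'\, A^k(i,j)$. Summing over all $(i,j)$ and substituting the identity above yields
\begin{equation*}
\alpha \sum_{i,j} A^k(i,j) \leq \beta^k X \leq \alpha' \sum_{i,j} A^k(i,j).
\end{equation*}
Rearranging the left inequality gives $\sum_{i,j}A^k(i,j)/\beta^k \leq X/\alpha$, and rearranging the right inequality gives $X/\alpha' \leq \sum_{i,j}A^k(i,j)/\beta^k$, which together are exactly the claimed two-sided bound.

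The positivity of the left endpoint, $0 < X/\alpha'$, is immediate from the Perron--Frobenius theory already invoked: $\beta>0$ and $x(i)>0$ for all $i$, so both $X=\sum_i x(i)>0$ and $\alpha'>0$. I do not anticipate a genuine obstacle here; the only point requiring care is keeping the direction of each inequality consistent when I divide through by $\beta^k$ and by the positive quantities $\alpha$ and $\alpha'$. The whole proof is a short computation once the identity $\sum_{i,j}A^k(i,j)\,x(j)=\beta^k X$ is in hand, and that identity follows purely from the definition of the Perron--Frobenius eigenpair without needing Lemma \ref{pf1}.
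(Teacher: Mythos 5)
Your argument is correct and coincides with the paper's own proof: both rest on the identity $\sum_{i,j}A^k(i,j)x(j)=\beta^k X$ obtained by summing the iterated eigenvector equation over $i$, combined with the bounds $\alpha\leq x(j)\leq\alpha'$ to sandwich the unweighted sum. The only cosmetic difference is that the paper writes the comparison as $x(j)/\alpha'\leq 1\leq x(j)/\alpha$ rather than multiplying through by $A^k(i,j)$ first.
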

\begin{proof}
Since $x(j)/\alpha' \leq 1 \leq x(j)/\alpha$ for all $j$ and $\sum_j A^k(i,j)x(j)=\beta^k x(i)$ for all $i$, we have 
$$
\frac{X}{\alpha'} = \frac{\sum_{i,j}A^k(i,j)x(j)}{\beta^k\alpha'}  \leq \frac{\sum_{i,j}A^k(i,j)}{\beta^k} \leq 
\frac{\sum_{i,j}A^k(i,j)x(j)}{\beta^k\alpha} = \frac{X}{\alpha}. 
$$
\end{proof}

For an irreducible matrix $A$, as above, and a fixed pair of indices $(i_1,j_1)$ we set 
\begin{equation}\label{a1}
A_1(i,j) := \left\{ \begin{array}{ll} A(i,j) & \text{if } (i,j) \neq (i_1,j_1) \\ 1 & \text{if } (i,j) = (i_1,j_1) \end{array}  \right. 
\end{equation}

\begin{theorem}\label{pf3}
Let $A$ be an irreducible matrix, as above. Assume that $A(i_1,j_1)\geq 2$. Then $A_1$ is an irreducible matrix such that $A_1\leq A$ 
and we have 
$$
\lim_{k\to\infty} \frac{\sum_{i,j}A_1^k(i,j)}{\beta^k} =0, 
$$
with $\beta$ the Perron-Frobenius eigenvalue of $A$.
\end{theorem}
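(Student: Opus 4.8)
The plan is to compare the Perron--Frobenius eigenvalue $\beta$ of $A$ with that of the perturbed matrix $A_1$. Since $A_1 \leq A$ entrywise and $A_1$ differs from $A$ only at the single entry $(i_1,j_1)$ where $A(i_1,j_1) \geq 2$ is replaced by $1$, the matrix $A_1$ is still non-negative, and its irreducibility follows because lowering a single positive entry to $1$ (rather than to $0$) preserves all the paths witnessing irreducibility: for each pair $(i,j)$ the power $A_1^k(i,j)$ remains strictly positive for the same $k$ for which $A^k(i,j)>0$. So the first step is to record that $A_1$ is indeed irreducible with $0 \leq A_1 \leq A$.

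\medskip

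The heart of the argument is to show that the Perron--Frobenius eigenvalue $\beta_1$ of $A_1$ satisfies $\beta_1 < \beta$ \emph{strictly}. The inequality $\beta_1 \leq \beta$ is the standard monotonicity of the spectral radius under entrywise domination of non-negative matrices, so the real content is strict inequality. Here I would use Lemma~\ref{pf1}: writing $x$ for the Perron--Frobenius eigenvector of $A$ (with $x(v)>0$ for all $v$), I compute $A_1 x$. For every index $i \neq i_1$ we have $(A_1 x)(i) = (Ax)(i) = \beta x(i)$, while for $i = i_1$ we get
$$
(A_1 x)(i_1) = (Ax)(i_1) - \bigl(A(i_1,j_1)-1\bigr)x(j_1) = \beta x(i_1) - \bigl(A(i_1,j_1)-1\bigr)x(j_1) < \beta x(i_1),
$$
since $A(i_1,j_1) - 1 \geq 1$ and $x(j_1) > 0$. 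Thus $A_1 x \leq \beta x$ with strict inequality in the $i_1$ coordinate, and $A_1 x \not\geq \lambda x$ for any $\lambda > \beta_1$ would follow from a careful application of the variational characterisation; more directly, the strict deficit at $i_1$ together with irreducibility of $A_1$ forces $\beta_1 < \beta$. The cleanest way to extract this is to apply $\lambda(\,\cdot\,,A_1)$ of Lemma~\ref{pf1} to the normalized vector $x/\|x\|$ and observe that the Perron eigenvalue $\beta_1$ of $A_1$ is attained at $A_1$'s own eigenvector $x_1$; pairing $A_1 x \leq \beta x$ (with strict inequality somewhere) against the strictly positive left Perron eigenvector of $A_1$ yields $\beta_1 \langle y_1, x\rangle < \beta \langle y_1, x\rangle$, hence $\beta_1 < \beta$.

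\medskip

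Once strict inequality $\beta_1 < \beta$ is in hand, the conclusion is immediate from Lemma~\ref{pf2} applied to the irreducible matrix $A_1$ with its own eigenvalue $\beta_1$. That lemma gives constants $0 < c_1 \leq c_2$ (depending on the Perron eigenvector of $A_1$) with
$$
\frac{\sum_{i,j} A_1^k(i,j)}{\beta_1^k} \leq c_2
$$
for all $k$, so that $\sum_{i,j} A_1^k(i,j) \leq c_2\, \beta_1^k$. Therefore
$$
\frac{\sum_{i,j} A_1^k(i,j)}{\beta^k} \leq c_2 \Bigl(\frac{\beta_1}{\beta}\Bigr)^{k} \underset{k\to\infty}{\longrightarrow} 0,
$$
because $\beta_1/\beta < 1$. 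This completes the proof.

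\medskip

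I expect the main obstacle to be the strict inequality $\beta_1 < \beta$ rather than the final geometric-decay estimate, which is routine once the eigenvalue gap is established. The subtlety is that merely lowering one entry could in principle leave the spectral radius unchanged for a reducible or degenerate matrix; it is precisely the irreducibility of $A_1$ (equivalently, the fact that $j_1$ still communicates with $i_1$ and every other vertex) together with the positivity of the entire Perron eigenvector $x$ that rules this out and upgrades $\beta_1 \leq \beta$ to $\beta_1 < \beta$. I would therefore take care to justify that the left Perron eigenvector of $A_1$ is strictly positive before pairing it against the strict coordinatewise deficit of $A_1 x$ at $i_1$.
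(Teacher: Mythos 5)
Your proof is correct, but the way you obtain the strict inequality $\beta_1<\beta$ is genuinely different from the paper's. The paper first arranges, for every pair $(i,j)$, an exponent $l_{i,j}$ with $A_1^{l_{i,j}}(i,j)<A^{l_{i,j}}(i,j)$ (by routing a path through the deleted edge), replaces $A$ and $A_1$ by $\overline{A}=\sum_{j=1}^k A^j$ and $\overline{A}_1=\sum_{j=1}^k A_1^j$ so as to reduce to the case $A_1<A$ strictly entrywise, and then tests the vector $x_1$ against $A\geq A_1+I$ to get $\lambda(x_1,A)\geq\beta_1+\epsilon$, invoking Lemma \ref{pf1}. You instead work directly with the single-entry deficit: $A_1x\leq\beta x$ with a strict drop at the coordinate $i_1$, paired against the strictly positive left Perron eigenvector $y_1$ of $A_1$, which gives $\beta_1\langle y_1,x\rangle=\langle y_1,A_1x\rangle<\beta\langle y_1,x\rangle$ at once. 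Your route is shorter and avoids the reduction via powers, at the cost of importing the existence and strict positivity of the left Perron eigenvector (which the paper does use elsewhere, in Lemma \ref{newlemma}, so this is harmless); the paper's route stays entirely within the variational characterization of Lemma \ref{pf1}. One small remark: your intermediate suggestion that $A_1x\leq\beta x$ could be fed into $\lambda(\,\cdot\,,A_1)$ of Lemma \ref{pf1} does not work as stated, since that lemma only yields \emph{lower} bounds $\beta_1\geq\lambda(x,A_1)$ from a test vector; it is precisely the left-eigenvector pairing you end up with that carries the argument, and you should present that as the proof rather than as an alternative. The final step, combining $\beta_1<\beta$ with Lemma \ref{pf2} applied to $A_1$ to get geometric decay, coincides with the paper's.
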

\begin{proof}
It is clear that $A_1$ is irreducible and $A_1\leq A$. Let $\beta_1$ be the Perron-Frobenius eigenvalue of $A_1$, with the corresponding 
Perron-Frobenius eigenvector $(x_1(1),\ldots,x_1(n))$. We have 
$$
\frac{\sum_{i,j}A_1^k(i,j)}{\beta^k} = \frac{\sum_{i,j}A_1^k(i,j)}{\beta_1^k}\cdot\frac{\beta_1^k}{\beta^k}. 
$$
Thus, in view of Lemma \ref{pf2}, it suffices to show that $\beta_1<\beta$.

Now, for each pair of indices $(i,j)$ we can find an $l_{i,j}$ such that $A_1^{l_{i,j}}(i,j) < A^{l_{i,j}}(i,j)$. Indeed, denote by $E_1$ a graph with 
the adjacency matrix  $A_1$. We may view $E_1$ as a subgraph of $E$. Given $(i,j)$ we can find a path $\mu\in E^*\setminus E_1^*$ with 
source in vertex $i$ and range in vertex $j$. To this end take a path $\mu_1$ from $i$ to $i_1$, a path  $\mu_2$ from $j_1$ to $j$, and an 
edge $e\in E^1\setminus E_1^1$ from $i_1$ to $j_1$. Then put $\mu:=\mu_1 e\mu_2$. Setting $l_{i,j}:=|\mu|$ we have $A_1^{l_{i,j}}(i,j) < 
A^{l_{i,j}}(i,j)$, as desired. Let $k$ be an integer such that $k > l_{i,j}$ for all $i,j$. Then we have 
$$
\sum_{j=1}^k A_1^j < \sum_{j=1}^k A^j .
$$
Now, we set $\overline{A}=\sum_{j=1}^k A^j$, $\overline{A}_1=\sum_{j=1}^k A^j_1$, $\overline{\beta}=\sum_{j=1}^k \beta^j$, and 
$\overline{\beta}_1=\sum_{j=1}^k \beta^j_1$. We have $\overline{A}x=\overline{\beta}x$ and $\overline{A}_1x_1=\overline{\beta}_1x_1$. 
To prove the theorem, it suffices to show that $\overline{\beta}_1 < \overline{\beta}$. Thus without loss of generality we may simply assume that 
$A_1 < A$.

Let $I$ be the $n\times n$ matrix with $I(i,j)=1$ for all $i,j$. Since $A > A_1$, we have 
$$
A \geq A_1 + I. 
$$
With $X_1:=\sum_j x_1(j) >0$, we see that 
$$
Ax_1 \geq (A_1+I)x_1 = \beta_1x_1 + \left(  \begin{array}{c} X_1 \\ \vdots \\ X_1    \end{array}   \right). 
$$
We can take a small $\epsilon>0$ such that 
$$
\beta_1x_1 + \left(  \begin{array}{c} X_1 \\ \vdots \\ X_1    \end{array}   \right) \geq (\beta_1+\epsilon)x_1
$$
This means that $\lambda(x_1,A) \geq \beta_1+\epsilon > \beta_1$. Since $\beta \geq \lambda(x_1,A)$, we may finally conclude that $\beta>\beta_1$. 
\end{proof}


\vspace{2mm}
\medskip\noindent
Tomohiro Hayashi\\
Nagoya Institute of Technology\\
Gokiso-cho, Showa-ku, Nagoya, Aichi\\
466--8555, Japan\\
email: hayashi.tomohiro@nitech.ac.jp\\

\smallskip\noindent
Jeong Hee Hong\\
Department of Mathematics and Computer Science\\
The University of Southern Denmark\\
Campusvej 55, DK--5230 Odense M\\
Denmark\\
email: hongjh@imada.sdu.dk\\

\smallskip\noindent
Wojciech Szyma{\'n}ski\\
Department of Mathematics and Computer Science\\
The University of Southern Denmark\\
Campusvej 55, DK--5230 Odense M\\
Denmark\\
email: szymanski@imada.sdu.dk

\vspace{2mm}
\bigskip\noindent
{\bf Competing interests:} The authors declare none.

\end{document}